\documentclass[12pt]{article}
\usepackage[utf8]{inputenc}
\usepackage[T1]{fontenc}
\usepackage[english]{babel}
\usepackage{amsmath, amssymb, amsthm}            
\usepackage{amstext, amsfonts, a4}
\usepackage{graphicx}
\usepackage{hyperref}
\usepackage{color}
\usepackage{stmaryrd}
\usepackage{enumitem}
\usepackage{tikz}
\usepackage{mathrsfs}
\usepackage{pgfplots}
\pgfplotsset{compat=1.15}
\usetikzlibrary{arrows}
\usepackage{comment}

\theoremstyle{plain}
	\newtheorem{Theo}{Theorem}[subsection] 
	\newtheorem{Lem}[Theo]{Lemma}            

        \newtheorem{TheoPrinc}{Theorem}     

\theoremstyle{definition}

\theoremstyle{remark}
	\newtheorem{Rema}[Theo]{Remark}

\def\RR{{\mathbb R}}    
\newcommand{\Int}{\mbox{Int}}
\newcommand{\Vol}{\mbox{Vol}}
\newcommand{\KVol}{\mbox{KVol}}
\def\KK{{\mathcal{K}}}
\def\Lgn{{L_g^n}}
\def\Hgn{{H_g^n}}
\def\Mgn{{M_g^n}}

\title{Lower bound for KVol on the minimal stratum of translation surfaces}
\author{Julien Boulanger}

\begin{document}
 \maketitle

\begin{abstract}
In this paper we are interested in algebraic intersection of closed curves of a given length on translation surfaces. We study the quantity KVol, defined in \cite{CKM} and studied in \cite{CKM, CKMcras, BLM22, Bou23}, and we construct families of translation surfaces in each connected component of the minimal stratum $\mathcal{H}(2g-2)$ of the moduli space of translation surfaces of genus $g \geq 2$ such that KVol is arbitrarily close to the genus of the surface, which is conjectured to be the infimum of KVol on $\mathcal{H}(2g-2)$.
\end{abstract}

\section{Introduction}
Given a closed oriented surface $X$, the algebraic intersection $\Int(\cdot,\cdot)$ defines a symplectic bilinear form on the first homology group $H_1(X, \RR)$. When $X$ is endowed with a Riemannian metric, we can define the quantity
\[
\KVol(X) := \Vol(X) \sup_{\alpha,\beta} \frac{\Int(\alpha,\beta)}{l_g(\alpha) l_g(\beta)}
\]
where the supremum ranges over all piecewise smooth closed curves $\alpha$ and $\beta$ in $X$. Here, $\Vol(X)$ denotes the Riemannian volume, and $l_g(\alpha)$ (resp. $l_g(\beta)$) denotes the length of $\alpha$ (resp. $\beta$) with respect to the metric. \newline

The study of KVol originates in the work of Massart \cite{these_massart} and Massart-Muetzel \cite{MM}. In fact, KVol is also well defined if the Riemannian metric has isolated singularities, and it has been studied recently specifically in the case of translation surfaces (see \cite{CKM}, \cite{CKMcras}, \cite{BLM22}, \cite{Bou23}) for which one could hope to get explicit computations of KVol.

Although it is easy to make KVol go to infinity by pinching a non-separating curve, it cannot be made arbitrarily small: Massart and Muetzel \cite{MM} showed that for any closed oriented surface $X$ with a Riemannian metric, we have $\KVol(X) \geq 1$ with equality if and only if $X$ is a torus and the metric is flat. In light of this result, it is interesting to wonder what are the Riemannian (resp. hyperbolic, flat) surfaces of fixed genus $g$ having small KVol. This question turns out to be difficult to answer. In \cite{MM}, KVol is studied as a function over the moduli space of hyperbolic surfaces of fixed genus: they provide asymptotic bounds when the systolic length goes to zero. In \cite{CKMcras} Cheboui, Kessi and Massart extend the study of KVol to the moduli space of translation surfaces of genus $2$ having a single singularity. Namely, they investigate the following quantity:
\[ \KK(\mathcal{H}(2)) := \inf_{X \in \mathcal{H}(2)} \KVol(X) \]
In particular, they conjecture that $\KK(\mathcal{H}(2)) = 2$ and show that $\KK(\mathcal{H}(2)) \leq 2$ by exhibiting a family of (square-tiled) translation surfaces $L(n+1,n+1)$ having $\KVol$ converging to $2$ as $n$ goes to infinity.\newline

In this note, we tackle the same question in any genus $g \geq 2$. More precisely, we conjecture that:
\begin{equation*}
\KK(\mathcal{H}(2g-2)) := \inf_{X \in \mathcal{H}(2g-2)} \KVol(X) = g
\end{equation*}
and we construct surfaces in $\mathcal{H}(2g-2)$ having their KVol arbitrarily close to $g$, showing:
\begin{TheoPrinc}\label{theo:main}
For all $g \geq 2$, 
\[ \KK(\mathcal{H}(2g-2)) \leq g. \]
\end{TheoPrinc}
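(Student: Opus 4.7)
The plan is to generalize the genus-$2$ construction of Cheboui, Kessi and Massart \cite{CKMcras} by producing, for each $g \geq 2$, an explicit one-parameter family of translation surfaces $X_n \in \mathcal{H}(2g-2)$ with $\KVol(X_n) \to g$ as $n \to \infty$. Since the definition of $\KVol$ involves a supremum over all pairs of closed curves, the argument splits into two qualitatively different parts: exhibiting a specific pair $(\alpha,\beta)$ that forces $\KVol(X_n)$ to be bounded below by something tending to $g$, and bounding the supremum from above by the same quantity.

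The first step is to describe $X_n$ explicitly. I would build it as a staircase-shaped polygon with $g$ ``steps'', each being a rectangle whose dimensions depend on an integer parameter $n$, with opposite sides glued in the standard translation-surface way. The gluing is arranged so that all vertices of the polygon are identified to a single conical point of total angle $2\pi(2g-1)$, so that $X_n \in \mathcal{H}(2g-2)$. The area $\Vol(X_n)$ and the lengths of the natural horizontal and vertical cylinder cores are then explicit polynomials in $n$, so that all quantities below can be handled by elementary calculation.

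For the lower bound on $\KVol(X_n)$, I would isolate a horizontal core curve $\alpha$ and a vertical core curve $\beta$ with large algebraic intersection and plug them into the definition, obtaining
\[
\KVol(X_n) \;\geq\; \Vol(X_n)\,\frac{|\Int(\alpha,\beta)|}{l_g(\alpha)\, l_g(\beta)},
\]
a ratio that is checked to converge to $g$ from below by a direct polynomial calculation. The matching upper bound is the heart of the argument. Using that $X_n$ is square-tiled (or at least carries a nice cylinder decomposition in every rational direction), the supremum in $\KVol$ can be restricted to pairs of simple closed geodesics, and then, after normalizing by direction, to pairs of cylinder cores. One then estimates the ratio $|\Int(\alpha,\beta)|/(l_g(\alpha)\,l_g(\beta))$ direction by direction and shows that, after multiplication by $\Vol(X_n)$, no pair beats $g$ in the limit.

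The main obstacle is making this upper bound uniform in $n$ and in the direction of the cylinder decomposition. For $L(n+1,n+1)$ in $\mathcal{H}(2)$, Cheboui--Kessi--Massart handle it with an explicit case analysis tailored to the two-cylinder structure; here, with $g-1$ extra steps in the staircase, the combinatorial complexity grows and one has to organize the analysis, perhaps inductively on $g$, around the slopes realized by short saddle connections. Once the finitely many directions that could potentially realize the supremum have been enumerated and shown to give ratios $\leq g/\Vol(X_n) + o(1)$, combining with the lower bound yields the theorem.
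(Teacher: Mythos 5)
Your construction is the right one---the paper also uses a staircase-shaped surface in $\mathcal{H}(2g-2)$ with step size depending on $n$ (the surfaces $\Lgn$), and the easy part (plugging a horizontal and a vertical core curve into the definition) matches the paper's computation with $e_1$ and $f_1$, except that the resulting ratio $(g(n+1)-1)/n = g + (g-1)/n$ approaches $g$ from \emph{above}, not from below; in any case the lower bound is not what the theorem needs. The genuine gap is in the upper bound, which you correctly identify as the heart of the matter but do not actually supply. Two specific problems with your proposed route: first, the reduction from pairs of simple closed geodesics to pairs of \emph{cylinder cores} is unjustified and in fact loses the relevant competitors, since on a translation surface closed geodesics through the singularity (unions of saddle connections) are exactly the curves one must control; the paper instead uses that, the singularity being unique, every saddle connection is a closed curve and the supremum can be taken over pairs of saddle connections. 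Second, there is no finite list of candidate directions to enumerate: saddle-connection directions are dense, and a ``direction by direction'' case analysis, even organized inductively on $g$, gives no uniform mechanism covering all but finitely many slopes. Your plan acknowledges this obstacle but offers no way around it, so the proof does not close.

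The missing idea in the paper is a decomposition of an arbitrary saddle connection $\alpha$ into \emph{long segments}, which cross one of the $g$ long rectangles of area $n$ and hence have length at least $n$, and \emph{short segments}, which stay in a small core staircase. Intersections are then bounded piece by piece: long segments meeting in a common rectangle are compared, after closing them up, with closed curves on the flat torus obtained from that rectangle, where $\KVol = 1$ gives $\#\,\alpha_i \cap \beta_j \leq \bigl(\tfrac{1}{n}(1+\tfrac{1}{n})^2 + \tfrac{1}{n^2}\bigr) l(\alpha_i) l(\beta_j)$; any two short segments meet at most once, and their number is controlled linearly by $l(\alpha)/n$ and $l(\beta)/n$; one extra unit accounts for a possible intersection at the singularity. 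Summing yields $\Int(\alpha,\beta) \leq \bigl(\tfrac{1}{n}(\tfrac{n+1}{n})^2 + \tfrac{6}{n^2}\bigr) l(\alpha) l(\beta)$ uniformly over all saddle connections, with no case analysis on directions at all, and multiplying by the area gives $\KVol(\Lgn) \to g$. Without an estimate of this kind (or some substitute that is uniform in the direction), your outline remains a plausible program rather than a proof.
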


It has to be remarked that translation surfaces with a single singularity are very specific surfaces and that the infimum of KVol over all Riemannian surfaces of genus $g$ does not grow linearly with the genus as it is expected in the case of $\mathcal{H}(2g-2)$. In particular, as suggested by Sabourau to the author, a construction of \cite{Buser_Sarnak} gives a surface $X_g$ for each genus $g \geq 1$ such that 
\[ \KVol(X) \leq C \frac{g}{\log(g+1)^2}\]
for a given constant $C > 0$. This bound can be obtained using Theorem 1.5 of \cite{MM}, which compares KVol and the systolic volume, and the fact that the (homological) systolic volume of the surfaces constructed in \cite{Buser_Sarnak} grows as $C' \cfrac{g}{\log(g+1)^2}$.

However, in the case of translation surfaces having a single singularity, it is not possible to construct similar surfaces, as Boissy and Geninska \cite{Boissy_Geniska} (and independantly Judge and Parlier \cite{Judge_Parlier}) showed that in this setting the systolic volume has a linear bound in the genus. This is the reason why we expect the infimum of KVol over $\mathcal{H}(2g-2)$ to grow linearly with $g$.\newline

\begin{Rema}
Concerning the lower bound on KVol, Theorem 1.1 of \cite{BPS} gives directly that for any constant $A>0$, there exist $c_A > 0$ such that for any Riemannian surface $X$ of genus $g$ and such that $\mathrm{SysVol}(X) < A$, we have:
\[  c_A \frac{g}{\log(g+1)^2} \leq \KVol(X) .\]
It would be interesting to know whether the same inequality holds with a universal constant $c>0$ which does not depend on $A$. It should be noted that such a result has recently been shown for hyperbolic surfaces in the case where the algebraic intersection is replaced by the geometric intersection, see \cite{Torkaman}. The proof in this later case relies on the existence of a short figure eigth geodesic.
\end{Rema}

\paragraph{Connected components of $\mathcal{H}(2g-2)$.} With Theorem \ref{theo:main} in mind, it is interesting to wonder whether the bound $g$ can be achieved in any connected component of $\mathcal{H}(2g-2)$. Kontsevich and Zorich \cite{Kontsevich_Zorich} classified the connected components of any stratum of translation surfaces, and showed in particular that for any $g \geq 4$, $\mathcal{H}(2g-2)$ has three connected components: the hyperelliptic component $\mathcal{H}^{hyp}(2g-2)$, and two other connected components $\mathcal{H}^{even}(2g-2)$ and $\mathcal{H}^{odd}(2g-2)$ distinguished by the spin invariant. In genus $2$, the only connected component is hyperelliptic while in genus $3$ there are two connected components : odd spin and hyperelliptic. It turns out that the family of surfaces we construct in Section \ref{sec:construction_Lgn} belongs to odd spin for any $g \geq 2$. In Section \ref{sec:hyperelliptic_even} we give a family of hyperelliptic surfaces $\Hgn$ and even spin $\Mgn$ surfaces such that both $\KVol(\Hgn)$ and $\KVol(\Mgn)$ converge to $g$ as $n$ goes to infinity. In particular, we show:
\begin{TheoPrinc}
\begin{itemize}
\item $\KK(\mathcal{H}^{hyp}(2g-2)) \leq g$ for any $g \geq 2$.
\item $\KK(\mathcal{H}^{odd}(2g-2)) \leq g$ for any $g \geq 3$.
\item $\KK(\mathcal{H}^{even}(2g-2)) \leq g$ for any $g \geq 4$.
\end{itemize}
\end{TheoPrinc}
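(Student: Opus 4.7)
The strategy is to reduce the three items to constructing, in each relevant connected component, an explicit family of translation surfaces and then to compute KVol by the same approach used for $\Lgn$ in Section~\ref{sec:construction_Lgn}. Much of the statement is already contained in Theorem~\ref{theo:main}: since $\Lgn$ lies in $\mathcal{H}^{odd}(2g-2)$ for every $g\geq 3$, the second bullet is immediate, and since $\mathcal{H}(2)$ is entirely hyperelliptic, the case $g=2$ of the first bullet also follows. It therefore remains to exhibit $\Hgn\in\mathcal{H}^{hyp}(2g-2)$ for $g\geq 3$ and $\Mgn\in\mathcal{H}^{even}(2g-2)$ for $g\geq 4$, each with $\KVol$ converging to $g$ as $n\to\infty$.

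For $\Hgn$ I would use a centrally symmetric polygonal model, for example a symmetric staircase of unit squares with one horizontal and one vertical step elongated by a factor $n$, identified in such a way that the rotation by $\pi$ about the center descends to a hyperelliptic involution fixing the unique conical singularity; this places $\Hgn$ in $\mathcal{H}^{hyp}(2g-2)$ automatically. For $\Mgn$ I would start from $\Lgn$ and apply a small combinatorial modification (inserting an extra unit square, or extending one cylinder by a unit) designed to preserve the stratum while flipping the parity of the Arf invariant, as can be read off from the Kontsevich--Zorich formula on a symplectic basis adapted to the cylinder decomposition; one then also checks that $\Mgn$ is not hyperelliptic.

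With both families in hand, the KVol upper bound proceeds exactly as for $\Lgn$. Using the horizontal and vertical cylinder decompositions, every piecewise smooth closed curve is homotoped to a concatenation of saddle connections, reducing the supremum defining $\KVol$ to a finite optimization over pairs of primitive homology classes. The core curves of the longest horizontal and vertical cylinders realize the expected ratio, which, once multiplied by the volume, converges to $g$ as $n\to\infty$. The two main obstacles I anticipate are: first, a careful verification that $\Mgn$ has genuinely even spin and is non-hyperelliptic, which requires a bookkeeping of indices along a symplectic basis and must be performed uniformly in the family; and second, the usual technical heart of any KVol computation, namely ruling out that some non-trivial concatenation of saddle connections realizes a strictly larger intersection-length ratio than the candidate pair. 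This last step must be redone combinatorially for each of $\Hgn$ and $\Mgn$, following the pattern of Section~\ref{sec:construction_Lgn}.
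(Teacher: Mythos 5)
Your overall reduction is the same as the paper's: $\Lgn$ (odd spin for $g\geq 3$) and $L_2^n$ (hyperelliptic, since $\mathcal{H}(2)$ has only the hyperelliptic component) already give the second bullet and the $g=2$ case of the first, and what remains is to produce a hyperelliptic family and an even-spin family whose KVol tends to $g$, estimated by the long-rectangle/core decomposition of Section \ref{sec:construction_Lgn}. However, your concrete candidate for $\Hgn$ does not work. A symmetric staircase of unit squares in which only one horizontal and one vertical step are elongated to length $n$ has $\Vol \approx 2n$, while the unit-square part still contains a horizontal and a vertical closed saddle connection of length $1$ intersecting once; hence $\KVol \geq \Vol \cdot \frac{1}{1\cdot 1} \approx 2n \to \infty$, and even ignoring such pairs the volume-to-best-pair ratio would tend to $2$, not $g$. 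To get the limit $g$ you need \emph{all} $g$ long rectangles to have area $n$ and every edge of a core polygon to be glued to a long rectangle: these are exactly the properties that make the analogues of Lemma \ref{lem:case_ccef} and Lemma \ref{lem:3.2} go through and give $\Vol \approx gn$ against a best pair of length product $n$. The paper achieves this by keeping the staircase combinatorics of the double regular $(2g+1)$-gon (which forces hyperellipticity) but taking the side lengths $l(e_i)$, $l(f_j)$ in geometric progression (powers of $n^{1/(g-1)}$, Figure \ref{fig:Hgn}), so each rectangle $R_i$ has area $n$ and the core polygons are of lower order.

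The even-spin construction is also not established as described. Extending a cylinder by a unit is a continuous deformation within the stratum, and the spin parity is constant on connected components, so it cannot flip the Arf invariant; and inserting an extra unit square is not a stratum-preserving operation in general (it changes the genus or the singularity data), so neither suggested modification of $\Lgn$ yields an even-spin surface. The paper instead starts from $H_3^n$, which lies in the hyperelliptic component of $\mathcal{H}(4)$ and has even parity, and adds staircase steps as in $\Lgn$: this raises the genus without changing the parity, and one then checks separately that the resulting $\Mgn$ is not hyperelliptic (a hyperelliptic involution would have to preserve each cylinder, which is impossible), before running the same KVol estimate. Finally, a minor point: the supremum in $\KVol$ does not reduce to a finite optimization over primitive homology classes; as in Section \ref{sec:construction_Lgn} it reduces to pairs of saddle connections, which are then cut into long and short segments — that part of your plan, which you rightly flag as the technical heart to be redone for each family, is consistent with the paper.
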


We assume familiarity with the geometry of translation surfaces, and encourage the reader to check out the surveys \cite{survey_Zorich}, \cite{survey_Wright} and \cite{survey_Massart}.

\paragraph{Acknoledgments.}
I would like to thank E.~Lanneau, D.~Massart and S.~Sabourau for useful and enlightening discussions related to the work presented here.

\section{Proof of Theorem \ref{theo:main}}\label{sec:construction_Lgn}
In this section, we prove Theorem \ref{theo:main} by exhitibing a family of surfaces $\Lgn$ for $g,n \geq 2$, (having odd spin parity and) such that $\Lgn$ has genus $g$ for each $n \geq 2$ and 
\[ \lim_{n \to \infty} \KVol(\Lgn) = g. \]

\subsection{Construction of the surface $\Lgn$.}
Given $g \geq 2$ and $n \geq 2$, define $\Lgn$ as the $(g(n+1)-1)$-square translation surface of genus $g$ with a single conical point which forms a staircase with steps of lengths and height $n$, as in Figure \ref{fig:examples_Sgn}.

\begin{figure}[h]
\center
\begin{tikzpicture}[line cap=round,line join=round,>=triangle 45,x=1cm,y=1cm, scale = 0.6]
\clip(-2,-4.1) rectangle (6,5.1);
\draw [line width=2pt] (-1,0)-- (4,0);
\draw [line width=1pt] (-1,1)-- (0,1);
\draw [line width=1pt] (-1,2)-- (0,2);
\draw [line width=1pt] (-1,3)-- (0,3);
\draw [line width=1pt] (-1,4)-- (0,4);
\draw [line width=2pt] (-1,5)-- (0,5);
\draw [line width=2pt] (-1,0)-- (-1,5);
\draw [line width=2pt] (0,1)-- (0,5);
\draw [line width=1pt] (0,0)-- (0,1);
\draw [line width=1pt] (1,0)-- (1,1);
\draw [line width=1pt] (2,0)-- (2,1);
\draw [line width=1pt] (3,0)-- (3,1);
\draw [line width=1pt] (4,0)-- (4,1);
\draw [line width=2pt] (5,-4)-- (5,1);
\draw [line width=2pt] (0,1)-- (5,1);
\draw [line width=1pt] (4,0)-- (5,0);
\draw [line width=1pt] (4,-1)-- (5,-1);
\draw [line width=1pt] (4,-2)-- (5,-2);
\draw [line width=1pt] (4,-3)-- (5,-3);
\draw [line width=2pt] (4,-4)-- (5,-4);
\draw [line width=2pt] (4,-4)-- (4,0);
\draw [line width=1pt, to-to] (-1.3,1)--(-1.3,5);
\draw (-1.2,3) node[left] {$n$};
\draw [line width=1pt, to-to] (0,-0.3) -- (4,-0.3);
\draw (2,-0.3) node[below] {$n$};
\draw [line width=1pt, to-to] (5.3,0) -- (5.3,-4);
\draw (5.3,-2) node[right] {$n$};
\draw (-1,-2.5) node[right] {$n=4$};
\draw (-1,-3.2) node[right] {$g=3$};
\end{tikzpicture}
\begin{tikzpicture}[line cap=round,line join=round,>=triangle 45,x=1cm,y=1cm, scale = 0.6]
\clip(-2,-5) rectangle (7.1,4.1);
\draw [line width=2pt] (-1,0)-- (3,0);
\draw [line width=1pt] (-1,1)-- (0,1);
\draw [line width=1pt] (-1,2)-- (0,2);
\draw [line width=1pt] (-1,3)-- (0,3);
\draw [line width=2pt] (-1,4)-- (0,4);
\draw [line width=2pt] (-1,0)-- (-1,4);
\draw [line width=2pt] (0,1)-- (0,4);
\draw [line width=1pt] (0,0)-- (0,1);
\draw [line width=1pt] (1,0)-- (1,1);
\draw [line width=1pt] (2,0)-- (2,1);
\draw [line width=1pt] (3,0)-- (3,1);
\draw [line width=2pt] (4,-3)-- (4,1);
\draw [line width=2pt] (3,-4)-- (3,0);
\draw [line width=2pt] (0,1)-- (4,1);
\draw [line width=1pt] (3,0)-- (4,0);
\draw [line width=1pt] (3,-1)-- (4,-1);
\draw [line width=1pt] (3,-2)-- (4,-2);
\draw [line width=1pt] (3,-3)-- (4,-3);
\draw [line width=2pt] (3,-4)-- (7,-4);
\draw [line width=1pt] (4,-3)-- (4,-4);
\draw [line width=1pt] (5,-3)-- (5,-4);
\draw [line width=1pt] (6,-3)-- (6,-4);
\draw [line width=2pt] (7,-3)-- (7,-4);
\draw [line width=2pt] (7,-3)-- (4,-3);
\draw [line width=1pt, to-to] (-1.3,1)--(-1.3,4);
\draw (-1.2,2.5) node[left] {$n$};
\draw [line width=1pt, to-to] (0,-0.3) -- (3,-0.3);
\draw (1.5,-0.3) node[below] {$n$};
\draw [line width=1pt, to-to] (4.3,0) -- (4.3,-3);
\draw (4.3,-1.5) node[right] {$n$};
\draw [line width=1pt, to-to] (4, -4.3) -- (7,-4.3);
\draw (5.5,-4.3) node[below] {$n$};
\draw (4, 3.2) node[right] {$n=3$};
\draw (4, 2.5) node[right] {$g=4$};
\end{tikzpicture}
\caption{The surface $L_3^4$ on the left, and $L_4^3$ on the right. The identifications are such that each horizontal (resp. vertical) rectangle is a cylinder.}
\label{fig:examples_Sgn}
\end{figure}
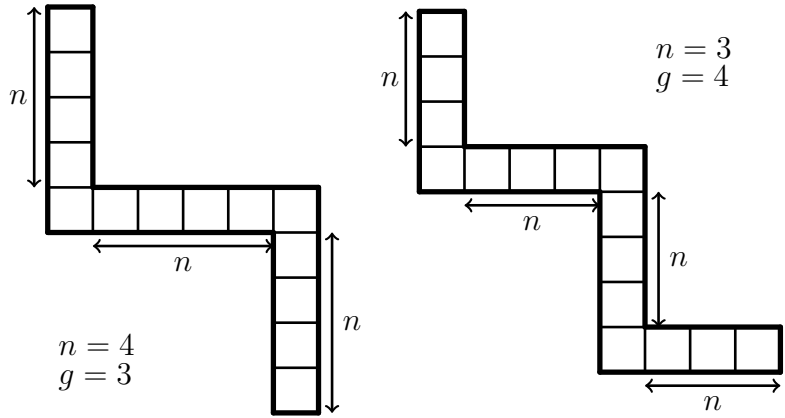

\paragraph{A basis of the homology.}
Let $e_1, \cdots , e_g$ (resp. $f_1, \cdots f_g$) be the horizontal (resp. vertical) saddle connections (see Figure \ref{fig:horiz_vert_sc}), seen as homology classes. Notice that for odd $i$, $e_i$ can be represented by a closed geodesic which do not pass through the singularity. We will refer to such homology classes as \emph{non singular} homology classes. This is also the case of $f_j$ for even $j$. On the contrary, for even $i$ (resp. odd $j$), the class $e_i$ (resp. $f_j$) will be called \emph{singular} as it can only be represented by closed geodesics passing through the singularity.

\begin{figure}
\center
\begin{tikzpicture}[line cap=round,line join=round,>=triangle 45,x=0.7cm,y=0.7cm]
\clip(-6.18,-6.98) rectangle (7,4.66);
\draw [line width=1pt] (-4,4)-- (-3,4);
\draw [line width=1pt] (-3,4)-- (-3,2);
\draw [line width=1pt] (-4,4)-- (-4,1.02);
\draw [line width=1pt] (-4,1.02)-- (-1,1);
\draw [line width=1pt] (-3,2)-- (0,2);
\draw [line width=1pt] (0,2)-- (0,0);
\draw [line width=1pt] (-1,1)-- (-1,-1);
\draw [line width=1pt,dash pattern=on 3pt off 3pt] (0,-1)-- (1,-2);
\draw [line width=1pt] (1,-3)-- (3,-3);
\draw [line width=1pt] (2,-2)-- (4,-2);
\draw [line width=1pt] (4,-2)-- (4,-5);
\draw [line width=1pt] (3,-3)-- (3,-6);
\draw [line width=1pt] (4,-5)-- (6,-5);
\draw [line width=1pt] (3,-6)-- (6,-6);
\draw [line width=1pt] (6,-6)-- (6,-5);
\draw (-3.5,4) node[above] {$e_1$};
\draw (-2,2) node[above] {$e_2$};
\draw (-4,1.5) node[left] {$f_2$};
\draw (-4,3) node[left] {$f_1$};
\draw (-0.5,2) node[above] {$e_3$};
\draw (-1,0) node[left] {$f_3$};
\draw (5,-6) node[below] {$e_g$};
\draw (3.5,-6) node[below] {$e_{g-1}$};
\draw (6,-5.5) node[right] {$f_g$};
\draw (4,-4) node[right] {$f_{g-1}$};
\draw (4,-2.5) node[right] {$f_{g-2}$};
\begin{scriptsize}
\draw [fill=black] (-4,4) circle (2pt);
\draw [fill=black] (-3,4) circle (2pt);
\draw [fill=black] (-3,2) circle (2pt);
\draw [fill=black] (-4,1.02) circle (2pt);
\draw [fill=black] (-1,1) circle (2pt);
\draw [fill=black] (0,2) circle (2pt);
\draw [fill=black] (-1,-1) circle (2pt);
\draw [fill=black] (3,-3) circle (2pt);
\draw [fill=black] (4,-2) circle (2pt);
\draw [fill=black] (4,-5) circle (2pt);
\draw [fill=black] (3,-6) circle (2pt);
\draw [fill=black] (6,-5) circle (2pt);
\draw [fill=black] (6,-6) circle (2pt);
\draw [fill=black] (-1,2) circle (2pt);
\draw [fill=black] (-3,1.0133333333333334) circle (2pt);
\draw [fill=black] (3,-2) circle (2pt);
\draw [fill=black] (4,-3) circle (2pt);
\draw [fill=black] (3,-5) circle (2pt);
\draw [fill=black] (4,-6) circle (2pt);
\draw [fill=black] (-4,2.0133333333333336) circle (2pt);
\end{scriptsize}
\end{tikzpicture}
\caption{The horizontal and vertical saddle connections $e_1, \cdots, e_g$, resp. $f_1, \cdots, f_g$.}
\label{fig:horiz_vert_sc}
\end{figure}
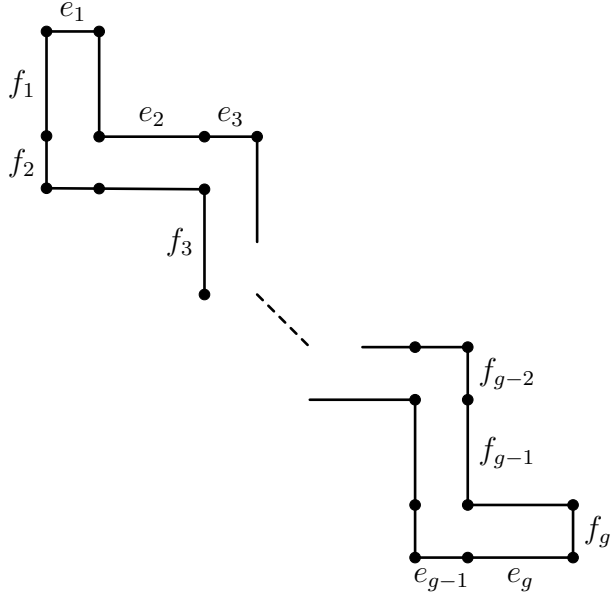

The intersection matrix of the $e_i$ and $f_j$ is given by the following table:

\begin{center}
\begin{tabular}{| c || c c c c c c c|}
\hline
 $\Int(e_i,f_j)$ & $e_1$ & $e_2$ & $e_3$ & $e_4$ & $e_5$ & $\cdots$ & $e_g$ \\ 
\hline\hline
 $f_1$ & 1 & -1 & 0  & 0 & 0 & $\cdots$ & 0 \\  
 $f_2$ & 0 & 1  & 0  & 0 & 0 & & 0 \\  
 $f_3$ & 0  & -1 & 1 & -1 & 0 & & 0 \\  
 $f_4$ & 0  & 0  & 0  & 1 & 0 & & 0 \\  
 $f_5$ & 0 & 0  & 0  & -1 & 1 & $\cdots$  & 0\\  
 $\cdots$ & &   &     &    &    & $\cdots$ & $\cdots$ \\  
 $f_g$ & 0  & 0 & 0 & 0 & 0 & $\cdots$ & 1 \\  
\hline
\end{tabular}
\end{center}

To see this, notice that for odd $i$, the fact that $e_i$ can be represented by a non-singular closed curve gives $\Int(e_i,f_j) = \delta_{i,j}$. The same holds for $f_j$ for even $j$. Next, given $i$ even, the holomogy class $e_{i-1} + e_i + e_{i+1}$ corresponds to a non-singular curve in $\Lgn$ which intersects $f_j$ if and only if $j=i$. In particular, 
\[ \Int(e_{i-1},f_j) + \Int(e_{i},f_j) + \Int(e_{i+1},f_j) = \delta_{i,j} \]
But the fact that both $i-1$ and $i+1$ are odd gives $\Int(e_{i-1},f_j) = \delta_{i-1,j}$ and $\Int(e_{i+1},f_j) = \delta_{i+1,j}$, so that
\[ \Int(e_{i},f_j) = \delta_{i,j} - \delta_{i-1,j} - \delta_{i+1,j} \]

Further, as $\Int(e_{i-1}+e_i+e_{i+1}, e_i) = 0$ for even $i$ (resp. $\Int(f_{j-1}+f_j+f_{j+1}, f_j) = 0$ for odd $j$), the same arguments gives that the $e_i$'s (resp. the $f_j$'s) do not intersect each other.\newline

As a concluding remark, notice that closed geodesics representing $e_1$ and $f_1$ are intersecting once and have respective length $1$ and $n$, and in particular:
\[ \KVol(\Lgn) \geq \Vol(\Lgn) \cdot \frac{\Int(e_1,f_1)}{l(e_1) l (f_1)} = (g(n+1)-1) \cdot \frac{1}{n}. \]

\paragraph{Computation of the spin.} As explained in \cite[Section 3]{Kontsevich_Zorich}, it is easy to compute the spin parity of an abelian differential $\omega$ given a symplectic basis of the first homology group $(a_i, b_i)_{1 \leq i \leq g}$ represented by smooth curves, and we have:
\[ \varphi (\omega) = \sum_{i=1}^g \Omega(a_i) \Omega(b_i) \mod 2. \]
where $\Omega (a_i) = ind_{a_i} +1$ and $2\pi \cdot ind_{a_i}$ is the total change of angle between the tangent vector to the curves and the horizontal foliation. Further, for any $a,b \in H^1(X,\omega)$, we have: 
\begin{equation}\label{eq:omega}
\Omega(a+b) = \Omega(a) + \Omega(b) + \Int(a,b).
\end{equation}
In the case of $\Lgn$, we use the basis $(a_i, b_i)_{1 \leq i \leq g}$ defined by:
\begin{align*}
a_i & = \left\{ 
\begin{array}{cc}
 e_i & \text{ if $i$ is even}  \\
e_{i-1} + e_i + e_{i+1} & \text{ if $i$ is odd}
\end{array} \right.  & \text{ and } & & b_i = f_i.
\end{align*}
The index of each $a_i$ is $0$ as well as the index of each $b_i$ for even $i$ because they correspond to non-singular homology classes. Further, using \eqref{eq:omega} we show that $\Omega(b_1) = 0$, as well as $\Omega(b_g) = 0$ if $g$ is odd, while $\Omega(b_i)=1$ for odd $i$, $1<i<g$. In particular, we deduce that the spin structure of $\Lgn$ has odd parity.\newline

Further, it should be remarked that although $L_2^n$ is hyperelliptic for any $n \geq 2$, $\Lgn$ is not hyperelliptic if $g \geq 3$. This is because an hyperelliptic involution would have to fix each cylinder, and hence must act as an involution of $R_1 \cup C_1$ (with the notations of Figure \ref{fig:another_model}) so that it must act as an involution on $C_1$, but it must also act as an involution of $C_1 \cup R_2 \cup C_2$, which is then impossible.

\paragraph{A useful model for the surfaces $\Lgn$.}
Let us finish this section by giving another model for $\Lgn$, which, although less intuitive at first sight, turns out to be helpful for the study of the intersections of saddle connections on $\Lgn$. This model is obtained from a cut and paste procedure which is described in Figure \ref{fig:another_model} in the example of $L_4^3$. The main idea is to glue together all the squares at the corners of $\Lgn$ to form a \emph{core staircase} to which are attached the \emph{long rectangles}. A general picture is given in Figure \ref{fig:another_model_2}.

\begin{figure}[h]
\center
\definecolor{zzttff}{rgb}{0.6,0.2,1}
\definecolor{ccqqqq}{rgb}{0.8,0,0}
\definecolor{ffvvqq}{rgb}{1,0.3333333333333333,0}
\definecolor{fuqqzz}{rgb}{0.9568627450980393,0,0.6}
\definecolor{qqqqff}{rgb}{0,0,1}
\definecolor{qqwuqq}{rgb}{0,0.39215686274509803,0}
\begin{tikzpicture}[line cap=round,line join=round,>=triangle 45,x=1cm,y=1cm]
\clip(-5.2,-4.2) rectangle (10.2,4.2);
\fill[line width=2pt,color=qqwuqq,fill=qqwuqq,fill opacity=0.2] (-5,1) -- (-4,1) -- (-4,4) -- (-5,4) -- cycle;
\fill[line width=2pt,color=qqqqff,fill=qqqqff,fill opacity=0.2] (-4,1) -- (-4,0) -- (-1,0) -- (-1,1) -- cycle;
\fill[line width=2pt,color=fuqqzz,fill=fuqqzz,fill opacity=0.25] (-1,0) -- (0,0) -- (0,-3) -- (-1,-3) -- cycle;
\fill[line width=2pt,color=ffvvqq,fill=ffvvqq,fill opacity=0.25] (0,-3) -- (0,-4) -- (3,-4) -- (3,-3) -- cycle;
\fill[line width=2pt,color=ccqqqq,fill=ccqqqq,fill opacity=0.25] (0,-4) -- (-1,-4) -- (-1,-3) -- (0,-3) -- cycle;
\fill[line width=2pt,color=zzttff,fill=zzttff,fill opacity=0.25] (0,0) -- (-1,0) -- (-1,1) -- (0,1) -- cycle;
\fill[line width=2pt,color=fuqqzz,fill=fuqqzz,fill opacity=0.25] (3.6,-0.6) -- (4.6,-0.6) -- (4.6,-3.6) -- (3.6,-3.6) -- cycle;
\fill[line width=2pt,color=qqwuqq,fill=qqwuqq,fill opacity=0.25] (5,-0.6) -- (5,-3.6) -- (6,-3.6) -- (6,-0.6) -- cycle;
\fill[line width=2pt,color=qqqqff,fill=qqqqff,fill opacity=0.2] (6.6,0) -- (6.6,1) -- (9.6,1) -- (9.598828611049326,0) -- cycle;
\fill[line width=2pt,color=ffvvqq,fill=ffvvqq,fill opacity=0.25] (5.6,1.4) -- (8.6,1.4) -- (8.6,2.4) -- (5.6,2.4) -- cycle;
\fill[line width=2pt,color=zzttff,fill=zzttff,fill opacity=0.1] (5,1) -- (5,0) -- (4,0) -- (4,1) -- cycle;
\fill[line width=2pt,color=ccqqqq,fill=ccqqqq,fill opacity=0.1] (5,1) -- (5,2) -- (4,2) -- (4,1) -- cycle;
\draw [line width=1pt] (-5,0)-- (-5,1);
\draw [line width=1pt] (-5,1)-- (-5,4);
\draw [line width=1pt] (-5,4)-- (-4,4);
\draw [line width=1pt] (-4,4)-- (-4,1);
\draw [line width=1pt,dash pattern=on 3pt off 3pt] (-4,1)-- (-4,0);
\draw [line width=1pt] (-4,0)-- (-5,0);
\draw [line width=1pt,dash pattern=on 3pt off 3pt] (-5,1)-- (-4,1);
\draw [line width=1pt] (-4,1)-- (-1,1);
\draw [line width=1pt] (-1,1)-- (0,1);
\draw [line width=1pt] (0,1)-- (0,0);
\draw [line width=1pt,dash pattern=on 3pt off 3pt] (0,0)-- (-1,0);
\draw [line width=1pt,dash pattern=on 3pt off 3pt] (-1,0)-- (-1,1);
\draw [line width=1pt] (-1,0)-- (-4,0);
\draw [line width=1pt] (-1,0)-- (-1,-3);
\draw [line width=1pt] (0,0)-- (0,-3);
\draw [line width=1pt,dash pattern=on 3pt off 3pt] (0,-3)-- (-1,-3);
\draw [line width=1pt] (-1,-3)-- (-1,-4);
\draw [line width=1pt] (-1,-4)-- (0,-4);
\draw [line width=1pt,dash pattern=on 3pt off 3pt] (0,-4)-- (0,-3);
\draw [line width=1pt] (0,-3)-- (3,-3);
\draw [line width=1pt] (3,-3)-- (3,-4);
\draw [line width=1pt] (3,-4)-- (0,-4);
\draw [line width=1pt] (3.6,-0.6)-- (4.6,-0.6);
\draw [line width=1pt] (4.6,-0.6)-- (4.6,-3.6);
\draw [line width=1pt,color=fuqqzz] (4.6,-3.6)-- (3.6,-3.6);
\draw [line width=1pt] (3.6,-3.6)-- (3.6,-0.6);
\draw [line width=1pt] (5,-0.6)-- (5,-3.6);
\draw [line width=1pt,color=qqwuqq] (5,-3.6)-- (6,-3.6);
\draw [line width=1pt] (6,-3.6)-- (6,-0.6);
\draw [line width=1pt] (6,-0.6)-- (5,-0.6);
\draw [line width=1pt] (6.6,0)-- (6.6,1);
\draw [line width=1pt] (6.6,1)-- (9.6,1);
\draw [line width=1pt,color=qqqqff] (9.6,1)-- (9.598828611049326,0);
\draw [line width=1pt] (9.598828611049326,0)-- (6.6,0);
\draw [line width=1pt] (5.6,1.4)-- (8.6,1.4);
\draw [line width=1pt,color=ffvvqq] (8.6,1.4)-- (8.6,2.4);
\draw [line width=1pt] (8.6,2.4)-- (5.6,2.4);
\draw [line width=1pt] (5.6,2.4)-- (5.6,1.4);
\draw [line width=1pt,dash pattern=on 3pt off 3pt] (5,1)-- (5,0);
\draw [line width=1pt] (5,0)-- (4,0);
\draw [line width=1pt,color=qqqqff] (4,0)-- (4,1);
\draw [line width=1pt,dash pattern=on 3pt off 3pt] (4,1)-- (5,1);
\draw [line width=1pt] (5,1)-- (5,2);
\draw [line width=1pt,color=fuqqzz] (5,2)-- (4,2);
\draw [line width=1pt,color=ffvvqq] (4,2)-- (4,1);
\draw [line width=1pt, to-to] (5,1.5)-- (5.6,1.9050508599237597);
\draw [line width=1pt,color=qqwuqq] (5,1)-- (6,1);
\draw [line width=1pt] (5,0)-- (6,0);
\draw [line width=1pt] (6,0)-- (6,1);
\draw [line width=1pt, to-to] (6,0.5)-- (6.6,0.5);
\draw [line width=1pt, to-to] (5.5,0)-- (5.5,-0.6);
\draw [line width=1pt, to-to] (4.5,0)-- (4.1,-0.6);
\draw (4,1.5) node[left] {$F_4$};
\draw (4,0.5) node[left] {$F_2$};
\draw (4.5,2) node[above] {$E_3$};
\draw (5.4,0.9) node[above] {$E_1$};
\draw (8.6,1.9) node[right] {$F_4$};
\draw (9.6,0.5) node[right] {$F_2$};
\draw (4.1,-3.6) node[below] {$E_3$};
\draw (5.5,-3.6) node[below] {$E_1$};
\draw (-4.9,2.5) node[right] {$R_1$};
\draw (-2.4,0.2) node[above] {$R_2$};
\draw (-0.9,-1.6) node[right] {$R_3$};
\draw (1.5,-3.2) node[below] {$R_4$};
\draw (5.5,-2) node[below] {$R_1$};
\draw (7.6,0.5) node[right] {$R_2$};
\draw (4.1,-2) node[below] {$R_3$};
\draw (6.6,1.9) node[right] {$R_4$};
\draw (-4.5,0.1) node[above] {$C_1$};
\draw (-0.5,0.1) node[above] {$C_2$};
\draw (-0.5,-3.2) node[below] {$C_3$};
\draw (5.5,0.1) node[above] {$C_1$};
\draw (4.5,0.1) node[above] {$C_2$};
\draw (4.5,1.1) node[above] {$C_3$};
\end{tikzpicture}
\caption{ $L_4^3$ and its alternative model.}
\label{fig:another_model}
\end{figure}
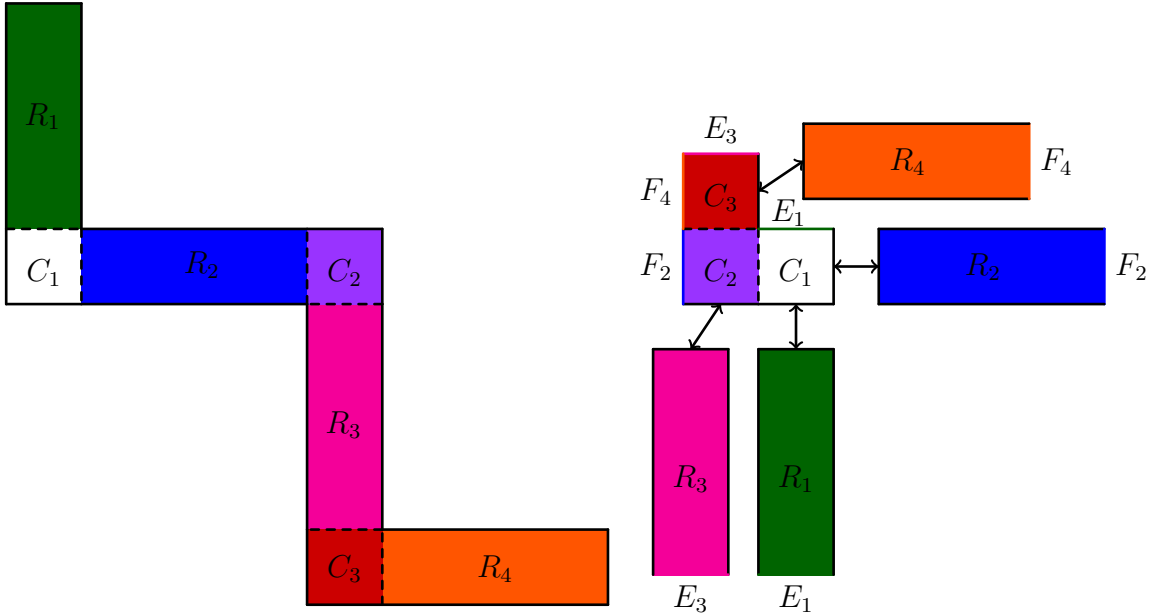

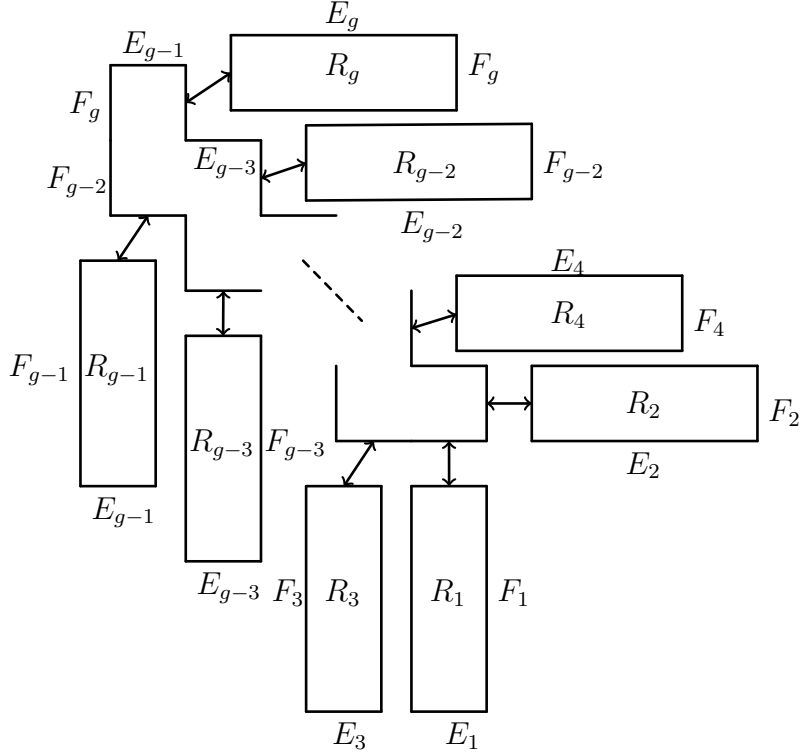
\begin{figure}[h]
\center
\begin{tikzpicture}[line cap=round,line join=round,>=triangle 45,x=1cm,y=1cm]
\clip(-5.5,-4.5) rectangle (5.3,6.3);
\draw [line width=1pt] (-4.4,2.4)-- (-3.4,2.4);
\draw [line width=1pt] (-3.4,2.4)-- (-3.4,-0.6);
\draw [line width=1pt] (-3.4,-0.6)-- (-4.4,-0.6);
\draw [line width=1pt] (-4.4,-0.6)-- (-4.4,2.4);
\draw [line width=1pt] (-4,3)-- (-3,3);
\draw [line width=1pt] (-4,3)-- (-4,4);
\draw [line width=1pt] (-4,4)-- (-4,5);
\draw [line width=1pt] (-4,5)-- (-3,5);
\draw [line width=1pt] (-3,5)-- (-3,4);
\draw [line width=1pt] (-3,4)-- (-2,4);
\draw [line width=1pt] (-1.4,-0.6)-- (-0.4,-0.6);
\draw [line width=1pt] (-0.4,-0.6)-- (-0.4,-3.6);
\draw [line width=1pt] (-0.4,-3.6)-- (-1.4,-3.6);
\draw [line width=1pt] (-1.4,-3.6)-- (-1.4,-0.6);
\draw [line width=1pt] (0,-0.6)-- (1,-0.6);
\draw [line width=1pt] (1,-0.6)-- (1,-3.6);
\draw [line width=1pt] (1,-3.6)-- (0,-3.6);
\draw [line width=1pt] (0,-3.6)-- (0,-0.6);
\draw [line width=1pt] (1,0)-- (0,0);
\draw [line width=1pt] (0,0)-- (-1,0);
\draw [line width=1pt] (-1,0)-- (-1,1);
\draw [line width=1pt] (0,2)-- (0,1);
\draw [line width=1pt] (0,1)-- (1,1);
\draw [line width=1pt] (1,1)-- (1,0);
\draw [line width=1pt] (0.6,2.2)-- (3.6,2.2);
\draw [line width=1pt] (3.6,2.2)-- (3.6,1.2);
\draw [line width=1pt] (3.6,1.2)-- (0.6,1.2);
\draw [line width=1pt] (1.6,1)-- (4.6,1);
\draw [line width=1pt] (4.6,1)-- (4.6,0);
\draw [line width=1pt] (4.6,0)-- (1.6,0);
\draw [line width=1pt] (1.6,0)-- (1.6,1);
\draw [line width=1pt] (0.6,1.2)-- (0.6,2.2);
\draw [line width=1pt] (1.6,3.22)-- (-1.4,3.2);
\draw [line width=1pt] (-1.4,3.2)-- (-1.4,4.2);
\draw [line width=1pt] (-1.4,4.2)-- (1.6,4.22);
\draw [line width=1pt] (1.6,4.22)-- (1.6,3.22);
\draw [line width=1pt] (0.6,4.4)-- (-2.4,4.4);
\draw [line width=1pt] (-2.4,4.4)-- (-2.4,5.4);
\draw [line width=1pt] (-2.4,5.4)-- (0.6,5.4);
\draw [line width=1pt] (0.6,5.4)-- (0.6,4.4);
\draw [line width=1pt] (-3,3)-- (-3,2);
\draw [line width=1pt] (-3,2)-- (-2,2);
\draw [line width=1pt] (-2,4)-- (-2,3);
\draw [line width=1pt] (-3,1.4)-- (-2,1.4);
\draw [line width=1pt] (-2,1.4)-- (-2,-1.6);
\draw [line width=1pt] (-2,-1.6)-- (-3,-1.6);
\draw [line width=1pt] (-3,-1.6)-- (-3,1.4);
\draw [line width=1pt] (-2,3)-- (-1,3);
\draw [line width=1pt,dash pattern=on 3pt off 3pt] (-1.44,2.4)-- (-0.66,1.6);
\draw [line width=1pt, to-to] (0,1.5)-- (0.6,1.6866666666666656);
\draw [line width=1pt, to-to] (1,0.5)-- (1.6,0.5);
\draw [line width=1pt, to-to] (0.5,0)-- (0.5009967405589751,-0.6);
\draw [line width=1pt, to-to] (-0.5,0)-- (-0.8984023818581294,-0.6);
\draw [line width=1pt, to-to] (-2.5035954928659843,1.4)-- (-2.5,2);
\draw [line width=1pt, to-to] (-3.902994615283088,2.4)-- (-3.5,3);
\draw [line width=1pt, to-to] (-3,4.5)-- (-2.4,4.900229320360969);
\draw [line width=1pt, to-to] (-2,3.495235324078027)-- (-1.4,3.698589718276874);
\draw (0.3,-3.6) node[anchor=north west] {$E_1$};
\draw (-1.2,-3.6) node[anchor=north west] {$E_3$};
\draw (-3,-1.6) node[anchor=north west] {$E_{g-3}$};
\draw (-4.4,-0.6) node[anchor=north west] {$E_{g-1}$};
\draw (-4,5.6) node[anchor=north west] {$E_{g-1}$};
\draw (-3.05,4.05) node[anchor=north west] {$E_{g-3}$};
\draw (-4.7,4.8) node[anchor=north west] {$F_{g}$};
\draw (-5,3.8) node[anchor=north west] {$F_{g-2}$};
\draw (4.6,0.7) node[anchor=north west] {$F_2$};
\draw (3.6,1.9) node[anchor=north west] {$F_4$};
\draw (1.6,4) node[anchor=north west] {$F_{g-2}$};
\draw (0.6,5.3) node[anchor=north west] {$F_g$};
\draw (2.7,0) node[anchor=north west] {$E_2$};
\draw (1.7,2.7) node[anchor=north west] {$E_4$};
\draw (-0.3,3.2) node[anchor=north west] {$E_{g-2}$};
\draw (-1.3,6) node[anchor=north west] {$E_g$};
\draw (1,-1.7) node[anchor=north west] {$F_1$};
\draw (-2,-1.7) node[anchor=north west] {$F_3$};
\draw (-2.1,0.3) node[anchor=north west] {$F_{g-3}$};
\draw (-5.5,1.3) node[anchor=north west] {$F_{g-1}$};
\draw (0.1,-1.7) node[anchor=north west] {$R_1$};
\draw (-1.3,-1.7) node[anchor=north west] {$R_3$};
\draw (-3.1,0.3) node[anchor=north west] {$R_{g-3}$};
\draw (-4.5,1.3) node[anchor=north west] {$R_{g-1}$};
\draw (2.7,0.8) node[anchor=north west] {$R_2$};
\draw (1.7,2) node[anchor=north west] {$R_4$};
\draw (-0.4,4) node[anchor=north west] {$R_{g-2}$};
\draw (-1.3,5.3) node[anchor=north west] {$R_g$};
\end{tikzpicture}
\caption{The alternative model for $\Lgn$ is made of a core staircase to which are attached the long rectangles $R_i$. The curve $E_i$ (resp. $F_j$) represents the homology class $e_i$ (resp. $f_j$).}
\label{fig:another_model_2}
\end{figure}

\subsection{An upper bound on $KVol(\Lgn)$}
In this section we provide estimates for KVol on the surface $\Lgn$. Recall from \cite[Section 3]{MM} that the supremum in the definition of KVol can be taken over pairs of simple closed geodesics. In the case of translation surfaces, closed geodesics are homologous to unions of saddle connections. Since saddle connections are closed curves on $\Lgn$ (which has a single singularity), we have:

\[ \KVol(\Lgn) = \sup_{\alpha,\beta \text{ saddle connections}} \frac{\Int(\alpha,\beta)}{l(\alpha)l(\beta)} \]
In this setting, we show:
\begin{Theo}\label{theo:int_alpha_beta}
For any pair of saddle connections $\alpha, \beta$ on $\Lgn$, we have 
\begin{equation}\label{etoile}
 \frac{\Int(\alpha,\beta)}{l(\alpha)l(\beta)} \leq \frac{1}{n}(\frac{n+1}{n})^2+ \frac{6}{n^2}
\end{equation}
\end{Theo}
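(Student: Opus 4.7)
The plan is to use a cylinder decomposition: Figure~\ref{fig:another_model_2} exhibits $L_g^n$ as a union of $g$ horizontal cylinders $C_1^h,\ldots,C_g^h$ (each obtained by gluing a long rectangle to one adjacent core staircase square) and $g$ vertical cylinders, all of height~$1$ and circumference $n+1$. I would bound $|\Int(\alpha,\beta)|$ by the total number of geometric intersections of $\alpha$ and $\beta$, accumulated cylinder by cylinder.

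First I would dispose of the case where $\alpha$ or $\beta$ is horizontal or vertical: such a saddle connection is a concatenation of the $e_i$'s or $f_j$'s from Section~2.1, and \eqref{etoile} reduces to a direct check using the intersection table of Section~2.1 (the extremal pair $\alpha=e_1$, $\beta=f_1$ realizes the leading ratio~$1/n$). In the remaining \emph{oblique} case, both $\alpha$ and $\beta$ have nowhere-zero integer holonomies $(x_\alpha,y_\alpha)$ and $(x_\beta,y_\beta)$. In each horizontal cylinder $C_i^h$, the saddle connection $\alpha$ appears as $n_i^h(\alpha)$ parallel straight segments of vertical extent exactly~$1$ and horizontal extent $|x_\alpha/y_\alpha|$, so $\sum_i n_i^h(\alpha) = |y_\alpha|$, and similarly for $\beta$.

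The core of the argument is a per-cylinder estimate. Identifying $C_i^h \cong \RR/(n+1)\ZZ\times[0,1]$, a single $\alpha$-arc and a single $\beta$-arc meet at the isolated heights $h\in[0,1]$ where $h(c_\alpha - c_\beta)$ equals a fixed offset modulo $n+1$ (with $c_\alpha = x_\alpha/y_\alpha$). Summing over the $n_i^h(\alpha)\cdot n_i^h(\beta)$ arc pairs and bookkeeping boundary/discretisation effects gives
\[
I_i^h(\alpha,\beta) \;\le\; \frac{n_i^h(\alpha)\,n_i^h(\beta)\,|x_\alpha y_\beta - x_\beta y_\alpha|}{(n+1)\,|y_\alpha y_\beta|} \;+\; \min\!\bigl(n_i^h(\alpha),\,n_i^h(\beta)\bigr).
\]
Summing over $i$ and using $\sum_i n_i^h(\alpha)n_i^h(\beta)\le |y_\alpha y_\beta|$ together with the Cauchy--Schwarz bound $|x_\alpha y_\beta - x_\beta y_\alpha|\le l(\alpha)l(\beta)$ produces a principal term of order $l(\alpha)l(\beta)/(n+1)$ and a discretisation error controlled by $\min(|y_\alpha|,|y_\beta|)$. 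Performing the analogous count in vertical cylinders (with $|x|$'s replacing $|y|$'s) and combining the two bounds then yields, after dividing by $l(\alpha)l(\beta)$, the right-hand side of \eqref{etoile}.

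The main obstacle will be the precise per-cylinder estimate above, and in particular extracting the exact constants $(1+1/n)^2$ and $6$ appearing in \eqref{etoile}. The delicate regime is that of \emph{short} saddle connections (with $l(\alpha)$ or $l(\beta)$ of order~$1$): there the principal term $l(\alpha)l(\beta)/(n+1)$ no longer dominates the discretisation error $\min(|y_\alpha|,|y_\beta|)/l(\alpha)l(\beta)$, and one must supplement the oblique-cylinder estimate with a direct enumeration of short saddle connections on a fundamental domain of $L_g^n$, bounding their intersection ratios by hand using the intersection table of Section~2.1.
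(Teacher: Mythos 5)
There is a genuine quantitative gap, and it is not confined to the regime you flag. Your per-cylinder estimate carries an additive discretisation term $\min(n_i^h(\alpha),n_i^h(\beta))$, which after summation contributes $\min(|y_\alpha|,|y_\beta|)$ (plus $\min(|x_\alpha|,|x_\beta|)$ from the vertical count), i.e.\ an error of order $\min(l(\alpha),l(\beta))$. Relative to $l(\alpha)l(\beta)$ this is of order $1/\max(l(\alpha),l(\beta))$, while the room available in \eqref{etoile} beyond the principal term $\frac{1}{n+1}$ is only of order $\frac{9}{n^2}$ (indeed $\frac1n(1+\frac1n)^2-\frac{1}{n+1}\approx \frac{3}{n^2}$). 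So your bound fails not only when $l(\alpha)$ or $l(\beta)$ is of order $1$, but for \emph{every} pair whose longer curve has length $\lesssim n^2$; this cannot be repaired by a finite ``enumeration of short saddle connections on a fundamental domain'', since the problematic curves have length up to order $n^2$, their number grows with $n$, and the difficulty concerns pairs in which the other curve may be arbitrarily long. The idea your argument is missing is the one the paper uses to keep all error terms at scale $l(\alpha)l(\beta)/n^2$: cut $\alpha$ and $\beta$ into \emph{long} segments (each crossing a long rectangle $R_i$ lengthwise, hence of length $\geq n$) and \emph{short} segments inside the core staircase of Figure \ref{fig:another_model_2}. Then the number of segments of $\alpha$ is of order $l(\alpha)/n$ rather than $l(\alpha)$, short segments meet pairwise at most once (no side of the core staircase is glued to another side of it), and the torus argument of Lemma \ref{lem:3.2} ($\KVol=1$, area $n$) applied only to long segments allows each ``$+1$'' correction to be absorbed as $l(\alpha_i)l(\beta_j)/n^2$. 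Saddle connections that cross no long rectangle are treated separately by the homological argument of Lemma \ref{lem:case_ccef} (each intersection with such a curve forces the other curve to make a trip of length $\geq n$ through a long rectangle), giving the clean bound $1/n$ in that case.

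Two further inaccuracies in your set-up would also need fixing. First, the cylinder structure is misdescribed: the horizontal cylinders of $\Lgn$ are not all of height $1$ and circumference $n+1$; each vertical arm is itself a horizontal cylinder of circumference $1$ and height $n$ (these are exactly the long rectangles $R_i$), and the wide horizontal cylinders have circumference $n+1$ or $n+2$. Your per-cylinder formula with denominator $n+1$ therefore does not apply to all of them; one must work with the areas (which are at least $n$ in every case, so the leading term survives, but the formula as written does not). Second, every intersection point lies in exactly one horizontal and one vertical cylinder, so ``combining'' the horizontal and vertical counts by adding them double-counts all intersections; one family already sees everything. Finally, the mixed case where exactly one of $\alpha,\beta$ is horizontal or vertical is covered by neither of your two cases: the intersection table only controls intersections among the classes $e_i,f_j$, and an oblique $\beta$ is not a concatenation of these, so this case again requires the argument of Lemma \ref{lem:case_ccef} rather than a table check.
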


From this result, we deduce directly that
\[ \KVol(\Lgn) \leq  (g(n+1)-1) (\frac{(n+1)^2}{n^3} + \frac{6}{n^2}). \]
Further, as remarked in Section 2, 
\[ \frac{g(n+1)-1}{n} \leq KVol(\Lgn),\]
so that, in particular, $\KVol(\Lgn) \longrightarrow g$ as $n$ goes to infinity, proving Theorem \ref{theo:main}.

\begin{proof}[Proof of Theorem \ref{theo:int_alpha_beta}]
Let $\alpha$ and $\beta$ be two saddle connections on $\Lgn$. We decompose the homolgy class of $\alpha$ (resp. $\beta$) in the basis $(e_1, \cdots, e_g, f_1, \cdots , f_g)$ of the homology. The first case we deal with is as follows:

\begin{Lem}\label{lem:case_ccef}
For any saddle connection $\alpha$ in $L_ {g,n}$ being in homology an integer combination of the $e_i$, $i$ odd, and the $f_j$, $j$ even, and any saddle connection $\beta$, we have
\[ l(\beta) \geq n |\Int(\alpha,\beta)| \]
In particular
\[ \frac{\Int(\alpha,\beta)}{l(\alpha) l(\beta)} \leq \frac{1}{n}. \]
\end{Lem}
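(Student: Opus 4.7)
The plan is to exploit the cylinder structure of $\Lgn$: each non-singular basis class will be realized as the core of a maximal cylinder of width exactly $n$. First, I would identify, for each odd $i$, the non-singular horizontal cylinder $H_i$ of width $n$—namely, the strip of height $n$ sitting inside the $i$th vertical rectangle of the staircase, between its two bounding singular horizontal leaves—and symmetrically, for each even $j$, the non-singular vertical cylinder $V_j$ of width $n$ (the analogous strip inside the corresponding horizontal rectangle). These cylinders are pairwise disjoint, and their cores represent the homology classes $[e_i]$ and $[f_j]$ respectively.

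The central estimate is then geometric: whenever the saddle connection $\beta$ transversely crosses a cylinder of width $n$, it must traverse at least $n$ units in the direction perpendicular to the core, and the algebraic intersection with the core is bounded by the number of such crossings. Applied to the disjoint cylinders $H_i$ (whose perpendicular direction is vertical) and summed, this yields
\[
\sum_{i \text{ odd}} |\Int(e_i,\beta)| \;\leq\; \frac{l_y(\beta)}{n} \;\leq\; \frac{l(\beta)}{n},
\]
and symmetrically $\sum_{j \text{ even}} |\Int(f_j,\beta)| \leq l_x(\beta)/n \leq l(\beta)/n$.

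Finally, I would expand $\Int(\alpha,\beta)$ using the decomposition $[\alpha] = \sum_{i \text{ odd}} a_i [e_i] + \sum_{j \text{ even}} b_j [f_j]$ together with the explicit intersection matrix from the previous subsection, and combine the two cylinder estimates to derive the inequality $l(\beta) \geq n |\Int(\alpha,\beta)|$. The ``in particular'' bound $\Int(\alpha,\beta)/(l(\alpha)l(\beta)) \leq 1/n$ then follows because every saddle connection on $\Lgn$ has length at least $1$. The step I expect to be most delicate is this final combination: a naive triangle inequality would pick up an uncontrolled factor $\sum(|a_i|+|b_j|)$, so I would instead work with a geometric representative of $[\alpha]$ as a weighted union of the cylinder cores, counting its geometric intersections with $\beta$ pass-by-pass through the wide cylinders, and using that $\alpha$ is a single straight saddle connection to rigidly tie its coefficients $a_i, b_j$ to its horizontal and vertical displacements, which in turn are absorbed into the crossing estimate.
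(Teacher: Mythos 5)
Your overall strategy is the same as the paper's: the non-singular classes $e_i$ ($i$ odd) and $f_j$ ($j$ even) are core classes of the $g$ pairwise disjoint long rectangles (cylinders of circumference $1$ and height $n$), and each unit of $|\Int(e_i,\beta)|$ or $|\Int(f_j,\beta)|$ forces $\beta$ to make a complete crossing of the corresponding rectangle, costing length at least $n$. This is exactly the paper's statement that each singular basis element in the decomposition of $\beta$ corresponds to a trip through a long rectangle of length at least $n$ (indeed, for odd $i$, $\Int(e_i,\beta)$ is precisely the coefficient of the singular class $f_i$ in $\beta$). However, two points in your final combination do not deliver the stated constant $1/n$, and the constant matters: the main theorem needs $\frac{1}{n}(1+o(1))$ here, since a bound $C/n$ with $C>1$ in this case would only yield $\KK(\mathcal{H}(2g-2))\leq Cg$.

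First, adding your two displayed estimates gives $|\Int(\alpha,\beta)| \leq (l_x(\beta)+l_y(\beta))/n$, which for a straight segment is only bounded by $\sqrt{2}\,l(\beta)/n$. Instead, use that all $g$ long rectangles are pairwise disjoint, so the complete crossings (of horizontal and vertical rectangles together) are pairwise disjoint sub-segments of $\beta$, each of length at least $n$; this gives directly $l(\beta) \geq n\left(\sum_{i \text{ odd}}|\Int(e_i,\beta)| + \sum_{j \text{ even}}|\Int(f_j,\beta)|\right)$. Second, you correctly flag the coefficient problem, but the proposed fix does not close it: the horizontal and vertical holonomies of $\alpha$ only control the sums $\sum_i a_i$ and $\sum_j b_j$, not the individual $|a_i|,|b_j|$, and tying the bound to a displacement of $\alpha$ would in any case reintroduce a factor of $l(\alpha)$ where the first inequality of the lemma must involve $l(\beta)$ alone. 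What is needed is $|a_i|,|b_j|\leq 1$. This holds because $\Int(\alpha,e_i)=\Int(\alpha,f_j)=0$ for all non-singular classes forces the straight segment $\alpha$ to avoid the interiors of all long rectangles (a straight segment entering a long rectangle must cross it completely, and all its crossings are coherently oriented), so $\alpha$ lies in the core staircase; then $a_i=\Int(\alpha,f_i)$ with $f_i$ represented by a saddle connection contained in the long rectangle $R_i$, meeting $\alpha$ only at the singularity, whence $|a_i|\leq 1$, and similarly for $b_j$. With these two repairs your argument proves the lemma.
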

\begin{proof}[Proof of Lemma \ref{lem:case_ccef}]
As seen in the table of the intersections, the non-singular $e_i$ or $f_j$ do not intersect each other, and in particular do not intersect $\alpha$. It follows that if we decompose $\beta$ in the basis of the homology ($e_1, f_1, \cdots, e_g,f_g)$, the intersection $\Int(\alpha,\beta)$ will be at most the number of singular $e_i$ and $f_j$ in the decomposition. But each singular $e_i$ or $f_j$ in the decomposition of $\beta$ corresponds to a trip through a long rectangle $R_i$ and accounts for a length at least $n$, so that:
\[ l(\beta) \geq n |\Int(\alpha,\beta)|. \]
Given that $l(\alpha) \geq 1$, we get 
\[ \frac{\Int(\alpha,\beta)}{l(\alpha) l(\beta)} \leq \frac{1}{n}. \]
\end{proof}

In particular, we deduce from Lemma \ref{lem:case_ccef} that Equation \eqref{etoile} holds if either $\alpha$ or $\beta$ is an integer combination of the non-singular $e_i$ and $f_j$ only. In the rest of the proof, we will assume that neither $\alpha$ nor $\beta$ correspond to such saddle connections. In the alternative model for $\Lgn$, this says exactly that $\alpha$ and $\beta$ have to cross a long rectangle $R_i$.

In particular, we can decompose the saddle connections $\alpha$ and $\beta$ by cutting them each time they enters or leaves a rectangle $R_i$ (lengthwise). This gives a decomposition into smaller (non-closed) segments $\alpha = \alpha_1 \cup \cdots \cup \alpha_k$ (resp. $\beta = \beta_1 \cup \cdots \cup \beta_l$) alternating between:
\begin{enumerate}[label=(\roman*)]
\item long segments (of length at least $n$) inside a long rectangle $R_i$,
\item short segments which stay inside the core staircase of Figure \ref{fig:another_model_2}.
\end{enumerate} 
By convention, we will include the endoints in the short segments, apart from the singularities (the possible singular intersection will be counted separately). Since long segments and short segments are alternating, there are at least $\max(\lfloor k/2 \rfloor, 1)$ long segments and there are at most $\lceil k/2 \rceil$ short segments for $\alpha$. Notice that:
\begin{itemize}
\item Long segments and short segments do not lie in the same part of the surface, hence they cannot intersect. 
\item Any two short segments $\alpha_i$ and $\beta_j$ intersect at most once, as no side of the core staircase is identified to another side of the core staircase.
\item Concerning the intersection of long segments, we have:
\end{itemize}
\begin{Lem}\label{lem:3.2}
Given two long segments $\alpha_i$ and $\beta_j$ in the same rectangle $R$, we have
\[ \frac{\# \alpha_i \cap \beta_j }{l(\alpha_i) l(\beta_j)} \leq \frac{1}{n}(\frac{n+1}{n})^2 + \frac{1}{n^2}\]
where $\# \alpha_i \cap \beta_j$ denotes the cardinal of the set of intersection points.
\end{Lem}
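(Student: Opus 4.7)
The plan is to view the long rectangle $R$ as a flat cylinder with axis of length $n$ and circumference $1$: the cylinder structure comes from the horizontal or vertical cylinder of $\Lgn$ that $R$ belongs to, after removal of the corner square(s) that were absorbed into the core staircase. In suitable coordinates $(x,y) \in [0,n] \times \RR/\ZZ$, a long segment is then a straight line $y \equiv y_0 + sx \pmod 1$ traversing the cylinder fully from $x=0$ to $x=n$, with slope $s \in \RR$ and length $n\sqrt{1+s^2} \ge n$. (If instead one prefers to view $R$ as a plain convex rectangle, the two straight segments meet at most once, which already gives the ratio $\le 1/n^2$, well within the claim; the cylinder viewpoint is what is needed to recover the sharper $1/n$ term.)

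Writing $\alpha_i$ with slope $s_1$, offset $y_0^{(1)}$ and $\beta_j$ with slope $s_2$, offset $y_0^{(2)}$, their intersections in $R$ correspond to solutions $x \in [0,n]$ of the linear congruence $(s_1-s_2)x \equiv y_0^{(2)} - y_0^{(1)} \pmod 1$. A direct count shows this has at most $\lfloor |s_1-s_2| n \rfloor + 1$ solutions, so $\#(\alpha_i \cap \beta_j) \le |s_1 - s_2|n + 1$. Dividing by $l(\alpha_i)l(\beta_j) = n^2 \sqrt{(1+s_1^2)(1+s_2^2)}$ and invoking the identity $\frac{|s_1-s_2|}{\sqrt{(1+s_1^2)(1+s_2^2)}} = |\sin(\theta_1 - \theta_2)|$ (with $\theta_i = \arctan s_i$) yields
\[
\frac{\#(\alpha_i \cap \beta_j)}{l(\alpha_i) l(\beta_j)} \le \frac{|\sin(\theta_1 - \theta_2)|}{n} + \frac{1}{n^2 \sqrt{(1+s_1^2)(1+s_2^2)}} \le \frac{1}{n} + \frac{1}{n^2},
\]
which sits comfortably within the claimed bound $\tfrac{1}{n}\bigl(\tfrac{n+1}{n}\bigr)^2 + \tfrac{1}{n^2}$.

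The main technical point is to make precise the cylinder interpretation of $R$ inherited from $\Lgn$ and to account for boundary cases in the counting argument, for instance long segments that terminate at a corner of $R$ coinciding with the singularity of $\Lgn$, or segments whose endpoints sit on the identified boundary. These contribute at most an additive constant to the intersection count that is already absorbed by the $+1$ term, and this slack is precisely what the factor $(n+1)^2/n^2$ in the stated bound leaves room for. The rest of the argument is a routine verification that $\arctan(s_i)$ and the chosen angle coordinates give the claimed trigonometric identity.
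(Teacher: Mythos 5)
Your argument is correct and in fact gives a slightly sharper bound than the lemma, but it follows a genuinely different route from the paper. The paper identifies the sides of the long rectangle $R$ to form a flat torus $T$ of area $n$, closes each long segment into a closed curve by adding an arc of length at most $1$ (losing at most one intersection point), and then invokes $\KVol(T)=1$; the bookkeeping of this closing-up step is exactly what produces the constants $\frac{1}{n}\left(\frac{n+1}{n}\right)^2+\frac{1}{n^2}$. You instead work directly in the cylinder with explicit slope coordinates, count solutions of the congruence $(s_1-s_2)x\equiv c \pmod 1$ on $[0,n]$, and conclude with the identity $\frac{|s_1-s_2|}{\sqrt{(1+s_1^2)(1+s_2^2)}}=|\sin(\theta_1-\theta_2)|$. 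This avoids both the closing-up step and the appeal to KVol of the torus, and yields the cleaner bound $\frac{1}{n}+\frac{1}{n^2}$, which implies the stated one; of course both arguments ultimately rest on the same elementary fact that crossings of straight segments on a flat torus or cylinder are controlled by $|\sin\Delta\theta|$ times the product of the lengths divided by the area.

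Two points in your write-up should be tightened. First, the cylinder structure is not quite ``inherited from the horizontal or vertical cylinder of $\Lgn$ that $R$ belongs to'': that cylinder has circumference $n+1$ or $n+2$ in the long direction. What you actually need --- and what does hold by construction of the staircase, see Figure \ref{fig:another_model_2} --- is that the two long sides of $R$ are glued to each other in $\Lgn$, so that $R$ itself is an embedded flat cylinder of circumference $1$ and height $n$ (its core curve represents one of the non-singular classes $e_i$ or $f_j$), the only way in or out of $R$ is through the two short sides, and consequently every long segment has $x$-extent exactly $n$, as your parametrization assumes. Second, the parenthetical claim that viewing $R$ as a plain convex rectangle already gives at most one intersection is misleading: a long segment wraps around the circumference-$1$ direction and is not a single chord of the embedded rectangle, so without the identification of the long sides both the picture and the count change; it is precisely this identification that makes multiple crossings possible and the lemma nontrivial. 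Neither point is a genuine gap, but the gluing fact should be stated and justified rather than asserted.
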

\begin{proof}
The proof of this Lemma is similar to the proof of Proposition 2.5 in \cite{CKMcras}. We first identify the sides of each long rectangle $R$ to form a torus $T$. Then, for each long segment $\alpha_i$ (resp. $\beta_j$) contained in the long rectangle $R$, we construct a closed curve $\tilde{\alpha}_i$ (resp. $\tilde{\beta}_j$) on the corresponding torus $T$. This construction can be done by adding to $\alpha_i$ (resp. $\beta_j$) a small portion of curve of length at most one, and removes at most one intersection, so that
\begin{itemize}
\item[(a)] $\Int(\tilde{\alpha}_i,\tilde{\beta}_j) \geq \# \alpha_i \cap \beta_j -1$.
\item[(b)] $l(\tilde{\alpha}_i) \leq l(\alpha_i) + 1 \text{ and } l(\tilde{\beta}_j) \leq l(\beta_j)+1.$
\end{itemize}
Moreover, $l(\alpha_i) \geq n$ and $l(\beta_j) \geq n$ so that:
\begin{itemize}
\item[(c)] $l(\alpha_i)+1 \leq l(\alpha_i) (1 + 1/n)$ (and the same holds for $\beta_j$), 
\item[(d)] $1 \leq \cfrac{l(\alpha_i)l(\beta_j)}{n^2}$.
\end{itemize}
Now, since $\KVol(T) = 1$ on the flat torus $T$, and given that the rectangle $R$ has area $n$ (and so does the torus $T$), we get:
\[ \frac{\Int(\tilde{\alpha}_i,\tilde{\beta}_j)}{l(\tilde{\alpha}_i) l(\tilde{\beta}_j)} \leq \frac{1}{n}. \]
In particular
\begin{align*}
\#  \alpha_i \cap \beta_j  & \leq \Int(\tilde{\alpha}_i,\tilde{\beta}_j)+1 && \text{ by (a)} \\
& \leq \frac{1}{n}(l(\tilde{\alpha}_i) l(\tilde{\beta}_j))+1 \\
& \leq \frac{1}{n} (l(\alpha_i) + 1)(l(\beta_j) +1)+1 && \text{ by (b)} \\
& \leq \frac{1}{n} l(\alpha_i) (1+\frac{1}{n}) l(\beta_j) (1+\frac{1}{n})+ \frac{l(\alpha_i)l(\beta_j)}{n^2} && \text{ by (c) and (d).}
\end{align*}
This gives Theorem \ref{theo:int_alpha_beta}.
\end{proof}

\textbf{End of the proof.}
Counting all intersections, we have:
\begin{equation*}
\Int(\alpha,\beta) \leq (\sum_{i,j} \# \alpha_i \cap \beta_j ) +1
\end{equation*}
where the added intersection accounts for the possible singular intersection. Using the preceeding estimates, we have:
\begin{align*}
\Int(\alpha,\beta) & \leq \left( \sum_{\alpha_i,\beta_j \text{ long segments}} \# \alpha_i \cap \beta_j \right) + \left( \sum_{\alpha_i,\beta_j \text{ short segments}} \# \alpha_i \cap \beta_j \right) +1\\
& \leq \left( \sum_{\alpha_i,\beta_j \text{ long segments}} (\frac{1}{n}(\frac{n+1}{n})^2 + \frac{1}{n^2}) l(\alpha_i) l(\beta_j) \right) + \left( \sum_{\alpha_i,\beta_j \text{ short segments}}1 \right) +1\\
& \leq (\frac{1}{n}(\frac{n+1}{n})^2 + \frac{1}{n^2}) \left(\sum_{\alpha_i,\beta_j \text{ long segments}} l(\alpha_i) l(\beta_j) \right) + \lceil \frac{k}{2} \rceil \lceil \frac{l}{2} \rceil +1\\
& \leq (\frac{1}{n}(\frac{n+1}{n})^2 + \frac{1}{n^2}) l(\alpha) l(\beta) + \lceil \frac{k}{2} \rceil \lceil \frac{l}{2} \rceil +1
\end{align*}
Now, since there are at least $\max(\lfloor k/2 \rfloor,1)$ long segments of $\alpha$, each long segment having length at least $n$, we get $l(\alpha) \geq \max(\lfloor k/2 \rfloor,1)n$, so that $\frac{k-1}{2} \leq \frac{l(\alpha)}{n}$, and 
\[ \lceil \frac{k}{2} \rceil \leq \frac{k+1}{2} \leq \frac{l(\alpha)}{n} + 1 \leq \frac{2 l(\alpha)}{n} \]
where the last inequality comes from $l(\alpha) \geq n$. Similarly, we have $$ \lceil \frac{l}{2} \rceil \leq \cfrac{l+1}{2} \leq \cfrac{2 l(\beta)}{n}$$
so that 
\begin{align*}
\Int(\alpha,\beta) & \leq (\frac{1}{n} (\frac{n+1}{n})^2 + \frac{1}{n^2}) l(\alpha)l(\beta) + \frac{4}{n^2} l(\alpha) l(\beta)+1 \\
 & \leq (\frac{1}{n} (\frac{n+1}{n})^2 + \frac{5}{n^2})l(\alpha) l(\beta) + \frac{l(\alpha) l(\beta)}{n^2}
\end{align*}
again using that $l(\alpha) \geq n$ and $l(\beta) \geq n$.
\end{proof}

\section{Even spin and hyperelliptic families}\label{sec:hyperelliptic_even}
We conclude this paper by giving a hyperelliptic family of surfaces $\Hgn$ for $g \geq 3$ and an even spin family of surfaces $\Mgn$ (for $g \geq 4$) such that for fixed $g$, $\KVol(\Hgn)$ and $\KVol(\Mgn)$ converge to $g$ as $n$ goes to infinity. 

The proof is in fact similar to the case of $\Lgn$, as each surface can be decomposed into core polygons (giving rise to short segments) and long rectangles (giving rise to long segments). These two families of surfaces have the property that each edge of a core polygon is glued to an edge of a long rectangle, which allows to generalize Lemma \ref{lem:3.2}. Further, the curves staying in the core polygons do not intersect each other and the conclusion of Lemma \ref{lem:case_ccef} can be generalized to these families of surfaces. This allows to give bounds for $\KVol(\Hgn)$ and $\KVol(\Mgn)$ which are easily shown to converge to $g$ as $g$ is fixed and $n$ goes to infinity.

\subsection{The family $\Hgn$}
A convenient way to construct a family of hyperelliptic surfaces is to copy the staircase model of the double regular $(2g+1)$-gon. However, we need each \emph{long rectangle} to have area $n$. One way to do this is to set the lengths of the horizontal and vertical curves $e_i$ and $f_j$ drawn in the left of Figure \ref{fig:Hgn} as 
\[ l(e_i) = n^{\frac{g-i-1}{g-1}} \text{ and } l(f_j) = n^{\frac{j-1}{g-1}}. \]

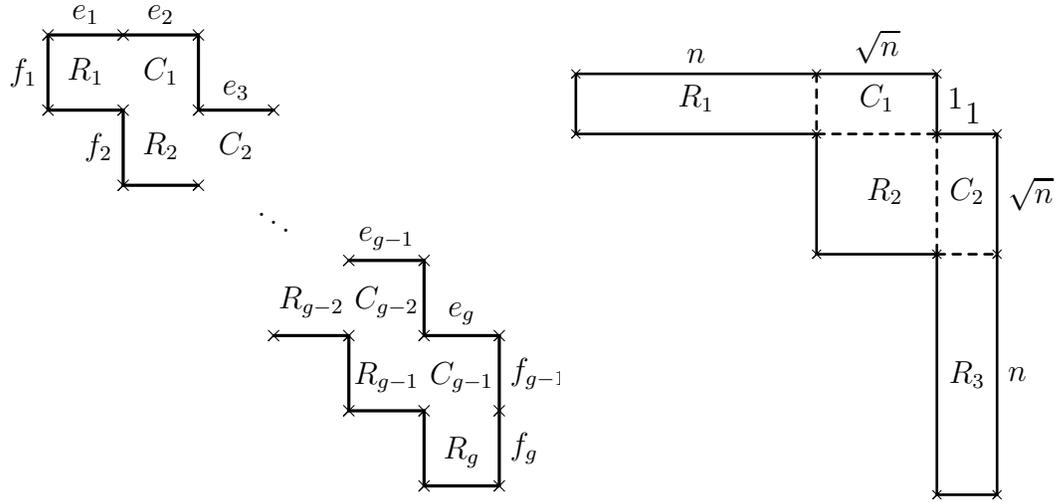
\begin{figure}[h]
\center
\begin{tikzpicture}[line cap=round,line join=round,>=triangle 45,x=1cm,y=1cm]
\clip(-3.5,-1.2) rectangle (3.8,6);
\draw [line width=1.2pt] (-3,5)-- (-2,5);
\draw [line width=1.2pt] (-2,5)-- (-1,5);
\draw [line width=1.2pt] (-1,5)-- (-1,4);
\draw [line width=1.2pt] (-1,4)-- (0,4);
\draw [line width=1.2pt] (-3,5)-- (-3,4);
\draw [line width=1.2pt] (-3,4)-- (-2,4);
\draw [line width=1.2pt] (-2,4)-- (-2,3);
\draw [line width=1.2pt] (-2,3)-- (-1,3);
\draw [line width=1.2pt] (1,2)-- (2,2);
\draw [line width=1.2pt] (2,2)-- (2,1);
\draw [line width=1.2pt] (1,1)-- (1,0);
\draw [line width=1.2pt] (1,0)-- (2,0);
\draw [line width=1.2pt] (2,0)-- (2,-1);
\draw [line width=1.2pt] (2,-1)-- (3,-1);
\draw [line width=1.2pt] (3,-1)-- (3,0);
\draw [line width=1.2pt] (3,0)-- (3,1);
\draw [line width=1.2pt] (3,1)-- (2,1);
\draw [line width=1.2pt] (1,1)-- (0,1);
\draw (-2.5,5) node[above] {$e_1$};
\draw (-1.5,5) node[above] {$e_2$};
\draw (-0.5,4) node[above] {$e_3$};
\draw (1.5,2) node[above] {$e_{g-1}$};
\draw (2.5,1) node[above] {$e_g$};
\draw (-3,4.5) node[left] {$f_1$};
\draw (-2,3.5) node[left] {$f_2$};
\draw (3, 0.5) node[right] {$f_{g-1}$};
\draw (3, -0.5) node[right] {$f_g$};
\draw (-2.5,4.2) node[above] {$R_1$};
\draw (-1.5,4.2) node[above] {$C_1$};
\draw (-1.5,3.2) node[above] {$R_2$};
\draw (-0.5,3.2) node[above] {$C_2$};
\draw (0,2.2) node[above] {$\ddots$};
\draw (0.5,1.1) node[above] {$R_{g-2}$};
\draw (1.5,1.1) node[above] {$C_{g-2}$};
\draw (1.5,0.1) node[above] {$R_{g-1}$};
\draw (2.5,0.1) node[above] {$C_{g-1}$};
\draw (2.5,-0.9) node[above] {$R_g$};
\begin{scriptsize}
\draw [color=black] (-3,5)-- ++(-2pt,-2pt) -- ++(4pt,4pt) ++(-4pt,0) -- ++(4pt,-4pt);
\draw [color=black] (-2,5)-- ++(-2pt,-2pt) -- ++(4pt,4pt) ++(-4pt,0) -- ++(4pt,-4pt);
\draw [color=black] (-1,5)-- ++(-2pt,-2pt) -- ++(4pt,4pt) ++(-4pt,0) -- ++(4pt,-4pt);
\draw [color=black] (-3,4)-- ++(-2pt,-2pt) -- ++(4pt,4pt) ++(-4pt,0) -- ++(4pt,-4pt);
\draw [color=black] (-2,4)-- ++(-2pt,-2pt) -- ++(4pt,4pt) ++(-4pt,0) -- ++(4pt,-4pt);
\draw [color=black] (-2,3)-- ++(-2pt,-2pt) -- ++(4pt,4pt) ++(-4pt,0) -- ++(4pt,-4pt);
\draw [color=black] (-1,4)-- ++(-2pt,-2pt) -- ++(4pt,4pt) ++(-4pt,0) -- ++(4pt,-4pt);
\draw [color=black] (0,4)-- ++(-2pt,-2pt) -- ++(4pt,4pt) ++(-4pt,0) -- ++(4pt,-4pt);
\draw [color=black] (-1,3)-- ++(-2pt,-2pt) -- ++(4pt,4pt) ++(-4pt,0) -- ++(4pt,-4pt);
\draw [color=black] (2,1)-- ++(-2pt,-2pt) -- ++(4pt,4pt) ++(-4pt,0) -- ++(4pt,-4pt);
\draw [color=black] (3,1)-- ++(-2pt,-2pt) -- ++(4pt,4pt) ++(-4pt,0) -- ++(4pt,-4pt);
\draw [color=black] (3,0)-- ++(-2pt,-2pt) -- ++(4pt,4pt) ++(-4pt,0) -- ++(4pt,-4pt);
\draw [color=black] (2,-1)-- ++(-2pt,-2pt) -- ++(4pt,4pt) ++(-4pt,0) -- ++(4pt,-4pt);
\draw [color=black] (2,0)-- ++(-2pt,-2pt) -- ++(4pt,4pt) ++(-4pt,0) -- ++(4pt,-4pt);
\draw [color=black] (1,0)-- ++(-2pt,-2pt) -- ++(4pt,4pt) ++(-4pt,0) -- ++(4pt,-4pt);
\draw [color=black] (3,-1)-- ++(-2pt,-2pt) -- ++(4pt,4pt) ++(-4pt,0) -- ++(4pt,-4pt);
\draw [color=black] (1,1)-- ++(-2pt,-2pt) -- ++(4pt,4pt) ++(-4pt,0) -- ++(4pt,-4pt);
\draw [color=black] (0,1)-- ++(-2pt,-2pt) -- ++(4pt,4pt) ++(-4pt,0) -- ++(4pt,-4pt);
\draw [color=black] (2,2)-- ++(-2pt,-2pt) -- ++(4pt,4pt) ++(-4pt,0) -- ++(4pt,-4pt);
\draw [color=black] (1,2)-- ++(-2pt,-2pt) -- ++(4pt,4pt) ++(-4pt,0) -- ++(4pt,-4pt);
\end{scriptsize}
\end{tikzpicture}
\begin{tikzpicture}[line cap=round,line join=round,>=triangle 45,x=1cm,y=1cm, scale=0.8]
\clip(-4.1,-4.1) rectangle (4,4);
\draw [line width=1pt] (-4,3)-- (0,3);
\draw [line width=1pt] (0,3)-- (2,3);
\draw [line width=1pt] (2,3)-- (2,2);
\draw [line width=1pt] (2,2)-- (3,2);
\draw [line width=1pt] (3,2)-- (3,0);
\draw [line width=1pt] (3,0)-- (3,-4);
\draw [line width=1pt] (3,-4)-- (2,-4);
\draw [line width=1pt] (2,-4)-- (2,0);
\draw [line width=1pt] (2,0)-- (0,0);
\draw [line width=1pt] (0,0)-- (0,2);
\draw [line width=1pt] (0,2)-- (-4,2);
\draw [line width=1pt] (-4,2)-- (-4,3);
\draw [line width=1pt,dash pattern=on 3pt off 3pt] (0,2)-- (2,2);
\draw [line width=1pt,dash pattern=on 3pt off 3pt] (2,2)-- (2,0);
\draw [line width=1pt,dash pattern=on 3pt off 3pt] (0,3)-- (0,2);
\draw [line width=1pt,dash pattern=on 3pt off 3pt] (2,0)-- (3,0);
\draw (-2,2.2) node[above] {$R_1$};
\draw (0.65,0.65) node[anchor=south west] {$R_2$};
\draw (2.5,0.65) node[above] {$C_2$};
\draw (2.5,-1.6) node[below] {$R_3$};
\draw (1,2.2) node[above] {$C_1$};
\draw (-2,3) node[above] {$n$};
\draw (1,3) node[above] {$\sqrt{n}$};
\draw (2,2.6) node[right] {$1$};
\draw (2.6,2) node[above] {$1$};
\draw (3,1) node[right] {$\sqrt{n}$};
\draw (3,-2) node[right] {$n$};
\begin{scriptsize}
\draw [color=black] (0,0)-- ++(-2pt,-2pt) -- ++(4pt,4pt) ++(-4pt,0) -- ++(4pt,-4pt);
\draw [color=black] (0,2)-- ++(-2pt,-2pt) -- ++(4pt,4pt) ++(-4pt,0) -- ++(4pt,-4pt);
\draw [color=black] (2,0)-- ++(-2pt,-2pt) -- ++(4pt,4pt) ++(-4pt,0) -- ++(4pt,-4pt);
\draw [color=black] (-4,2)-- ++(-2pt,-2pt) -- ++(4pt,4pt) ++(-4pt,0) -- ++(4pt,-4pt);
\draw [color=black] (-4,3)-- ++(-2pt,-2pt) -- ++(4pt,4pt) ++(-4pt,0) -- ++(4pt,-4pt);
\draw [color=black] (0,3)-- ++(-2pt,-2pt) -- ++(4pt,4pt) ++(-4pt,0) -- ++(4pt,-4pt);
\draw [color=black] (2,3)-- ++(-2pt,-2pt) -- ++(4pt,4pt) ++(-4pt,0) -- ++(4pt,-4pt);
\draw [color=black] (2,2)-- ++(-2pt,-2pt) -- ++(4pt,4pt) ++(-4pt,0) -- ++(4pt,-4pt);
\draw [color=black] (3,2)-- ++(-2pt,-2pt) -- ++(4pt,4pt) ++(-4pt,0) -- ++(4pt,-4pt);
\draw [color=black] (3,0)-- ++(-2pt,-2pt) -- ++(4pt,4pt) ++(-4pt,0) -- ++(4pt,-4pt);
\draw [color=black] (2,-4)-- ++(-2pt,-2pt) -- ++(4pt,4pt) ++(-4pt,0) -- ++(4pt,-4pt);
\draw [color=black] (3,-4)-- ++(-2pt,-2pt) -- ++(4pt,4pt) ++(-4pt,0) -- ++(4pt,-4pt);
\end{scriptsize}
\end{tikzpicture}
\caption{On the left, a combinatorial model for $\Hgn$. On the right, the example of $H_3^n$.}
\label{fig:Hgn}
\end{figure}

Next, we distinguish the core polygons $C_i$ and the big rectangles $R_i$ and proceed with the proof as in the case of $\Lgn$. Notice that the $e_i$'s (resp. the $f_j$'s) are pairwise non-intersecting and that the intersection of the $e_i$ and the $f_j$ is given by the following table:

\begin{center}
\begin{tabular}{| c || c c c c c c |}
\hline
 $\Int(e_i,f_j)$ & $e_1$ & $e_2$ & $e_3$ & $e_4$ & $e_5$ & $\cdots$ \\ 
\hline\hline
 $f_1$ & 1 & 0 & 0  & 0 & 0 & $\cdots$ \\  
 $f_2$ & -1 & 1  & 0  & 0 & 0 & $\cdots$ \\  
 $f_3$ & 1  & -1 & 1 & 0 & 0 & $\cdots$ \\  
 $f_4$ & -1  & 1  & -1  & 1 & 0 & $\cdots$ \\  
 $f_5$ & 1 & -1  & 1  & -1 & 1 & $\cdots$ \\  
 $\cdots$ & $\cdots$ & $\cdots$  & $\cdots$  & $\cdots$   & $\cdots$  & $\cdots$ \\   
\hline
\end{tabular}
\end{center}

This allows to get an adapted version of Lemma \ref{lem:case_ccef}:

\begin{Lem}
The closed saddle connections $\gamma$ contained in the core polygons correspond to the homology classes $e_{i}$, $f_{i-1}$ and $e_i + f_{i-1}$ for $2 \leq i \leq g$.\newline
For any such saddle connection $\gamma$ and any other saddle connection $g$, we have:
\[ \frac{\Int(\gamma,g)}{l(\gamma)l(g)} \leq \frac{1}{n}. \]
\end{Lem}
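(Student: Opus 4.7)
The plan is to adapt Lemma~\ref{lem:case_ccef} from the $L_g^n$ case using the cylinder structure of the $H_g^n$ staircase. Each long rectangle $R_i$ has dimensions $l(e_i)\times l(f_i)$ and area $n$, sits inside a larger flat cylinder whose cross-section is the relevant basis class, and different $R_i$'s are disjoint in the surface. For the classification: the core polygon $C_{i-1}$ is a rectangle of dimensions $l(e_i)\times l(f_{i-1})$ whose four corners are all identified to the unique singularity, so closed saddle connections contained in $C_{i-1}$ are straight segments from corner to corner, namely the four sides (with homology classes $\pm e_i$ and $\pm f_{i-1}$) and the two diagonals (classes $\pm(e_i\pm f_{i-1})$). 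The three classes in the statement correspond to one representative of each; the remaining diagonal $e_i-f_{i-1}$ satisfies an analogous bound.

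For the edge estimate, take $\gamma = e_i$ and view the vertical flat cylinder $V_i = R_i \cup C_{i-1}$ of the staircase, in which $e_i$ is the cross-section. Every piece of $g'$ that winds once around $V_i$ contributes exactly one signed crossing of $e_i$ and traverses the full vertical extent $l(f_i)$ of $R_i$. Summing over all windings,
\[
l(g'|_{R_i}) \;\geq\; l(f_i)\,|\Int(e_i,g')|,
\]
so using $l(e_i)\,l(f_i)=n$ and $l(g'|_{R_i})\leq l(g')$ gives $|\Int(e_i,g')|\leq l(e_i)\,l(g')/n$, the desired $1/n$ bound. The case $\gamma=f_{i-1}$ is symmetric, using the horizontal cylinder $H_{i-1}=R_{i-1}\cup C_{i-1}$ to obtain $|\Int(f_{i-1},g')|\leq l(g'|_{R_{i-1}})\,l(f_{i-1})/n$.

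For the diagonal case $\gamma = e_i+f_{i-1}$, write $x=l(g'|_{R_i})$ and $y=l(g'|_{R_{i-1}})$. Bilinearity and the edge estimates yield
\[
|\Int(\gamma,g')| \;\leq\; \frac{x\,l(e_i)+y\,l(f_{i-1})}{n}.
\]
The crucial geometric input is that $R_i$ and $R_{i-1}$ lie in different rows and different columns of the staircase, so they are disjoint in the surface and $x+y\leq l(g')$. Cauchy-Schwarz followed by $\sqrt{x^2+y^2}\leq x+y$ then gives $x\,l(e_i)+y\,l(f_{i-1}) \leq \sqrt{x^2+y^2}\,l(\gamma) \leq l(g')\,l(\gamma)$, closing the argument. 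The part I expect to be the main obstacle is the cylinder-winding bookkeeping in the edge estimate: one must verify carefully that each winding of $g'$ around $V_i$ really does traverse $R_i$ with length at least $l(f_i)$ (rather than being absorbed entirely into $C_{i-1}$). Once this is pinned down from the concrete staircase geometry of $H_g^n$, where $R_i$ is a full horizontal slab of height $l(f_i)$ inside $V_i$, the proof follows cleanly.
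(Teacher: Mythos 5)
Your overall architecture (classify the core saddle connections, prove an ``edge estimate'' for $e_i$ and $f_{i-1}$, combine them by Cauchy--Schwarz for the diagonal) is reasonable, and the Cauchy--Schwarz step is a genuinely good observation: simply adding the two edge estimates would only give $\bigl(l(e_i)+l(f_{i-1})\bigr)/\bigl(n\,l(\gamma)\bigr)\le \sqrt{2}/n$, so some such refinement is indeed needed for the class $e_i+f_{i-1}$. The gap is in the edge estimate itself, exactly at the point you flag as the main obstacle. The claim ``every crossing of $e_i$ traverses the full vertical extent $l(f_i)$ of $R_i$'' is false: $e_i$ is the top of $C_{i-1}$ glued to the bottom of $R_i$, so after an upward crossing of $e_i$ the curve $g$ enters $R_i$ from below, but the region it cannot leave before rising by $l(f_i)$ is the \emph{horizontal} cylinder $R_i\cup C_i$ (its two outer vertical edges are glued to each other), not $R_i$ alone, and not your vertical cylinder $V_i=C_{i-1}\cup R_i$, which $g$ may exit sideways into $C_i$ or $R_{i-1}$ and re-enter at a different height between two crossings of $e_i$, so that ``windings of $V_i$'' are not well defined. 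Concretely, a steep saddle connection crossing $e_i$ near the corner of $R_i$ adjacent to $C_i$ cuts that corner and performs almost all of its forced rise inside $C_i$ (the absorbing square is $C_i$, not $C_{i-1}$), contributing arbitrarily little length to $R_i$ for that crossing; hence $l(g|_{R_i})\ge l(f_i)\,|\Int(e_i,g)|$ is not established and is doubtful as stated. A second, smaller issue is the singular intersection: the extremal pair $\gamma=e_i$, $g=f_i$ meets only at the cone point, so a count of transverse interior crossings plus a crude $+1$ (as in the proof of Theorem \ref{theo:int_alpha_beta}) cannot give the sharp constant $1/n$; the contribution at the singularity must be handled exactly.

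Both problems disappear if you run your own argument with full cylinders in place of long rectangles: each upward crossing of $e_i$ forces a rise of $l(f_i)$ inside the row $Z_i=R_i\cup C_i$, each crossing of $f_{i-1}$ forces a horizontal travel of $l(e_{i-1})$ inside the column $Z'_{i-1}=C_{i-2}\cup R_{i-1}$, these two regions are disjoint, and with $x=l(g|_{Z_i})$, $y=l(g|_{Z'_{i-1}})$ your Cauchy--Schwarz computation goes through verbatim since $l(e_i)l(f_i)=l(e_{i-1})l(f_{i-1})=n$. The route the paper intends is different and purely homological, as an adaptation of Lemma \ref{lem:case_ccef}: writing $g=\sum_k a_k e_k+\sum_j b_j f_j$, the intersection table gives $\Int(e_i,f_j)\neq 0$ only for $j\ge i$, and each unit of $|b_j|$ with $j\ge i$ accounts for a trip of length at least $l(f_j)\ge l(f_i)$ through the (pairwise disjoint) rows, whence $|\Int(e_i,g)|\le l(g)/l(f_i)=l(e_i)l(g)/n$; symmetrically $|\Int(f_{i-1},g)|\le l(g)/l(e_{i-1})=l(f_{i-1})l(g)/n$ using $\Int(f_{i-1},e_k)\neq0$ only for $k\le i-1$ and the monotonicity of the $l(e_k)$. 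Note that even on this route the diagonal class still requires your Cauchy--Schwarz step (applied to the horizontal and vertical components of $g$) to recover the constant $1/n$ rather than $\sqrt2/n$, so that part of your proposal is a real contribution; what is missing is a correct justification of the edge estimates, which your rectangle-only bookkeeping does not provide.
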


Further, similarly to Lemma \ref{lem:3.2}, we have:

\begin{Lem}
For any two saddle connections $\alpha$ and $\beta$ which are not contained in the core polygons $C_i$, we have
\[ \frac{\Int(\alpha,\beta)}{l(\alpha)l(\beta)} \leq \frac{1}{n}(1+n^{-\frac{1}{g}})^2 + \frac{6}{n^2}. \]
\end{Lem}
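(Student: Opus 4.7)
The plan is to mimic the proof of Theorem~\ref{theo:int_alpha_beta} step by step. I would decompose each of $\alpha$ and $\beta$ into alternating long and short segments by cutting every time they cross the boundary between a long rectangle $R_i$ and a core polygon $C_j$: write $\alpha = \alpha_1 \cup \cdots \cup \alpha_k$ and $\beta = \beta_1 \cup \cdots \cup \beta_l$, where the long segments lie inside a single $R_i$ and the short segments inside the core polygons. Because in $\Hgn$ each edge of a core polygon is glued to an edge of some long rectangle, two short segments of $\alpha$ and $\beta$ can meet at most once, and a long segment of $\alpha$ meets a long segment of $\beta$ only when both lie in the same rectangle.

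For each pair of long segments $\alpha_i, \beta_j$ in the same $R_i$ I would adapt Lemma~\ref{lem:3.2}: identify opposite sides of $R_i$ to form a flat torus $T_i$ of area $n$, close up $\alpha_i$ and $\beta_j$ by an arc of length at most $s_i$ (the shorter side of $R_i$), and apply $\KVol(T_i)=1$ to obtain
\[
\#(\alpha_i \cap \beta_j) \;\leq\; \frac{(l(\alpha_i)+s_i)(l(\beta_j)+s_i)}{n} + 1.
\]
The scaling $l(e_i) = n^{(g-i-1)/(g-1)}$, $l(f_j) = n^{(j-1)/(g-1)}$ is tailored so that the worst-case ratio $s_i/l(\alpha_i)$ over all rectangles is at most $n^{-1/g}$, yielding the leading factor $(1 + n^{-1/g})^2$.

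I would then sum over pairs of long segments, using $\sum l(\alpha_i) l(\beta_j) \leq l(\alpha) l(\beta)$; bound the number of short-segment intersections by $\lceil k/2 \rceil \lceil l/2 \rceil$, itself controlled by a constant times $l(\alpha) l(\beta)/n^{2/g}$ through the same ratio bound; and finally add the single possible singular intersection. Using that $\alpha$ and $\beta$ must each cross the long dimension of some outer rectangle of $\Hgn$, so that $l(\alpha), l(\beta) \geq n$, all the secondary contributions land inside the $\frac{6}{n^2} l(\alpha) l(\beta)$ term, giving the stated inequality.

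The main obstacle is the uniform estimate $s_i/l(\alpha_i) \leq n^{-1/g}$: the rectangles $R_i$ in $\Hgn$ range from $n \times 1$ at the outermost steps down to essentially square $n^{1/2} \times n^{1/2}$ in the middle, so establishing this ratio bound requires a case analysis, and for the nearly square middle rectangles must rely on the fact that any long segment there has length at least $n^{1/g} s_i$. Once this is done, the rest of the argument is a direct repetition of the proof of Theorem~\ref{theo:int_alpha_beta}.
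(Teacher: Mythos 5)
Your overall strategy---cutting $\alpha$ and $\beta$ where they pass between the long rectangles and the core polygons, running the torus argument of Lemma~\ref{lem:3.2} on pairs of pieces lying in the same rectangle, and counting the remaining pairs---is indeed the route the paper has in mind. The gap lies in the two quantitative inputs you feed into it. In $\Hgn$ only the two outer rectangles $R_1$ and $R_g$ have a pair of sides identified to each other; for $2\leq i\leq g-1$ all four sides of $R_i$ are glued to core polygons. Hence a piece of a saddle connection inside such an $R_i$ is a straight chord of that rectangle: it can cut a corner and be arbitrarily short, so there is no lower bound of the form $l(\alpha_i)\geq n^{1/g}s_i$. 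Worse, for the (nearly) square middle rectangle $s_i$ is of order $\sqrt{n}$, so $n^{1/g}s_i$ exceeds the diameter (at most $\sqrt{2n}$) of that rectangle for large $n$: the ``fact'' you plan to rely on fails for \emph{every} segment there, and with it the uniform bound $s_i/l(\alpha_i)\leq n^{-1/g}$ that produces your leading factor. Your other input, $l(\alpha),l(\beta)\geq n$, also fails: a saddle connection not contained in the core polygons need not visit $R_1$ or $R_g$ at all; for instance the diagonal of the middle rectangle of $H_3^n$ is such a saddle connection and has length $\sqrt{2n}$. Consequently neither the $+1$'s coming from the long--long pairs nor the $\lceil k/2\rceil\lceil l/2\rceil$ count of short pairs can be absorbed into $6\,l(\alpha)l(\beta)/n^2$ the way you propose.

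This is precisely what the property emphasized in the paper (every edge of a core polygon is glued to an edge of a long rectangle) is meant to handle, but it enters differently from the way you use it: the closing-up/torus argument is needed, and works as in Lemma~\ref{lem:3.2}, only in $R_1$ and $R_g$, where pieces genuinely cross lengthwise, have length at least $n$, and close up at a cost of at most $1$; in the intermediate rectangles two pieces are straight chords and meet at most once, so those pairs must be counted together with the pairs of pieces in the core polygons. The genuinely missing step in your proposal is therefore a bound on the \emph{number} of pieces of $\alpha$ (equivalently, of crossings of $\alpha$ with the sides $e_i,f_j$ bounding the rectangles) in terms of $l(\alpha)$, valid even though an individual piece may be short: the alternation argument ``every other piece has length at least $n$'' from the proof of Theorem~\ref{theo:int_alpha_beta} is no longer available, and one must instead exploit the specific side lengths prescribed for the $e_i$ and $f_j$ to lower-bound the length accumulated between successive crossings. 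Until such a count is supplied, the secondary terms are not controlled and the stated inequality is not proved; you correctly located the difficulty in the nearly square rectangles, but the fix you suggest cannot work as stated.
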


Using that the area of $\Hgn$ is $gn + (g-1)n^{\frac{g-1}{g}}$, we get that
\[ g + (g-1)n^{\frac{-1}{g}} \leq KVol(\Hgn) \leq (g + (g-1)n^{\frac{-1}{g}})((1+n^{-\frac{1}{g}})^2 + \frac{6}{n}), \]
where the lover bound comes from the fact that $e_1$ and $f_1$ are intersecting once, and $l(e_1) l(f_1) = n$. Hence, for fixed $g$, $\KVol(\Hgn)$ goes to $g$ as $n$ goes to infinity.

\subsection{The family $\Mgn$}
Similarly to $\Lgn$ and $\Hgn$, it is possible to construct an even spin family of translation surfaces $\Mgn$, such that for any fixed $g \geq 4$, $\KVol(\Mgn)$ goes to $g$ as $n$ goes to infinity. For example, construct each $\Mgn$ from $H_3^n$ by adding steps as in $\Lgn$, see Figure \ref{fig:Mgn}. As we have seen in the case of $\Lgn$, the operation of adding steps do not change the parity of the spin structure. In particular, the parity of the spin structure of $\Mgn$ is the same as $H_3^n$, that is even. Further, similarly to $\Lgn$, the surface $\Mgn$ is not hyperelliptic as an hyperelliptic involution would have to fix each cylinder, and hence act as an involution of $C_2$, $R_3$ and $C_3$ but also $C_2 \cup R_3 \cup C_3$, which is impossible. In the case of $\Mgn$, an argument similar to the previous cases show:

\[ \frac{gn + 2\sqrt{n} + (g-3)}{n} \leq \KVol(\Mgn) \leq \frac{gn + 2\sqrt{n} + (g-3)}{n}((1 + \frac{1}{\sqrt{n}})^2 + \frac{6}{n}). \]

\begin{figure} 
\center
\begin{tikzpicture}[line cap=round,line join=round,>=triangle 45,x=1cm,y=1cm, scale=0.65]
\clip(-10.5,-4.7) rectangle (4,8.7);
\draw [line width=1pt] (0,0)-- (2,0);
\draw [line width=1pt] (0,0)-- (0,2);
\draw [line width=1pt] (0,2)-- (-4,2);
\draw [line width=0.8pt,dash pattern=on 3pt off 3pt] (-4,2)-- (-4,3);
\draw [line width=1pt] (-4,3)-- (0,3);
\draw [line width=1pt] (0,3)-- (2,3);
\draw [line width=1pt] (2,3)-- (2,2);
\draw [line width=1pt] (2,2)-- (3,2);
\draw [line width=1pt] (3,2)-- (3,0);
\draw [line width=1pt] (2,0)-- (2,-4);
\draw [line width=1pt] (2,-4)-- (3,-4);
\draw [line width=1pt] (3,-4)-- (3,0);
\draw [line width=1pt] (-4,2)-- (-5,2);
\draw [line width=1pt] (-5,2)-- (-5,3);
\draw [line width=0.8pt,dash pattern=on 3pt off 3pt] (-5,3)-- (-4,3);
\draw [line width=1pt] (-4,3)-- (-4,7);
\draw [line width=0.8pt,dash pattern=on 3pt off 3pt] (-4,7)-- (-5,7);
\draw [line width=1pt] (-5,7)-- (-5,3);
\draw [line width=1pt] (-4,7)-- (-4,8);
\draw [line width=1pt] (-4,8)-- (-5,8);
\draw [line width=0.8pt,dash pattern=on 3pt off 3pt] (-5,8)-- (-5,7);
\draw [line width=1pt] (-5,7)-- (-9,7);
\draw [line width=1pt] (-9,7)-- (-9,8);
\draw [line width=1pt] (-5,8)-- (-9,8);
\draw [line width=0.8pt,dash pattern=on 3pt off 3pt] (0,3)-- (0,2);
\draw [line width=0.8pt,dash pattern=on 3pt off 3pt] (0,2)-- (2,2);
\draw [line width=0.8pt,dash pattern=on 3pt off 3pt] (2,2)-- (2,0);
\draw [line width=0.8pt,dash pattern=on 3pt off 3pt] (2,0)-- (3,0);
\draw (-9,7.5) node[left] {$1$};
\draw (-7,8) node[above] {$n$};
\draw (-5,5) node[left] {$n$};
\draw (-2,3) node[above] {$n$};
\draw (1,3) node[above] {$\sqrt{n}$};
\draw (3,1) node[right] {$\sqrt{n}$};
\draw (2.5,-4) node[below] {$1$};
\draw (3,-2) node[right] {$n$};
\draw (-4.5,8) node[above] {$1$};
\draw (-5,2.5) node[left] {$1$};
\draw (-7,7) node[above] {$R_1$};
\draw (-4.5,7) node[above] {$C_1$};
\draw (-5,5) node[right] {$R_2$};
\draw (-4.5,2) node[above] {$C_2$};
\draw (-2,2) node[above] {$R_3$};
\draw (1,2) node[above] {$C_3$};
\draw (1,0.5) node[above] {$R_4$};
\draw (2.5,0.5) node[above] {$C_4$};
\draw (3,-2) node[left] {$R_5$};
\begin{scriptsize}
\draw [color=black] (0,0)-- ++(-1.5pt,-1.5pt) -- ++(3pt,3pt) ++(-3pt,0) -- ++(3pt,-3pt);
\draw [color=black] (2,0)-- ++(-1.5pt,-1.5pt) -- ++(3pt,3pt) ++(-3pt,0) -- ++(3pt,-3pt);
\draw [color=black] (0,2)-- ++(-1.5pt,-1.5pt) -- ++(3pt,3pt) ++(-3pt,0) -- ++(3pt,-3pt);
\draw [color=black] (-4,2)-- ++(-1.5pt,-1.5pt) -- ++(3pt,3pt) ++(-3pt,0) -- ++(3pt,-3pt);
\draw [color=black] (-4,3)-- ++(-1.5pt,-1.5pt) -- ++(3pt,3pt) ++(-3pt,0) -- ++(3pt,-3pt);
\draw [color=black] (0,3)-- ++(-1.5pt,-1.5pt) -- ++(3pt,3pt) ++(-3pt,0) -- ++(3pt,-3pt);
\draw [color=black] (2,3)-- ++(-1.5pt,-1.5pt) -- ++(3pt,3pt) ++(-3pt,0) -- ++(3pt,-3pt);
\draw [color=black] (2,2)-- ++(-1.5pt,-1.5pt) -- ++(3pt,3pt) ++(-3pt,0) -- ++(3pt,-3pt);
\draw [color=black] (3,2)-- ++(-1.5pt,-1.5pt) -- ++(3pt,3pt) ++(-3pt,0) -- ++(3pt,-3pt);
\draw [color=black] (3,0)-- ++(-1.5pt,-1.5pt) -- ++(3pt,3pt) ++(-3pt,0) -- ++(3pt,-3pt);
\draw [color=black] (2,-4)-- ++(-1.5pt,-1.5pt) -- ++(3pt,3pt) ++(-3pt,0) -- ++(3pt,-3pt);
\draw [color=black] (3,-4)-- ++(-1.5pt,-1.5pt) -- ++(3pt,3pt) ++(-3pt,0) -- ++(3pt,-3pt);
\draw [color=black] (-5,2)-- ++(-1.5pt,-1.5pt) -- ++(3pt,3pt) ++(-3pt,0) -- ++(3pt,-3pt);
\draw [color=black] (-5,3)-- ++(-1.5pt,-1.5pt) -- ++(3pt,3pt) ++(-3pt,0) -- ++(3pt,-3pt);
\draw [color=black] (-4,7)-- ++(-1.5pt,-1.5pt) -- ++(3pt,3pt) ++(-3pt,0) -- ++(3pt,-3pt);
\draw [color=black] (-5,7)-- ++(-1.5pt,-1.5pt) -- ++(3pt,3pt) ++(-3pt,0) -- ++(3pt,-3pt);
\draw [color=black] (-4,8)-- ++(-1.5pt,-1.5pt) -- ++(3pt,3pt) ++(-3pt,0) -- ++(3pt,-3pt);
\draw [color=black] (-5,8)-- ++(-1.5pt,-1.5pt) -- ++(3pt,3pt) ++(-3pt,0) -- ++(3pt,-3pt);
\draw [color=black] (-9,7)-- ++(-1.5pt,-1.5pt) -- ++(3pt,3pt) ++(-3pt,0) -- ++(3pt,-3pt);
\draw [color=black] (-9,8)-- ++(-1.5pt,-1.5pt) -- ++(3pt,3pt) ++(-3pt,0) -- ++(3pt,-3pt);
\end{scriptsize}
\end{tikzpicture}
\caption{The surface $M_5^n$.}
\label{fig:Mgn}
\end{figure}
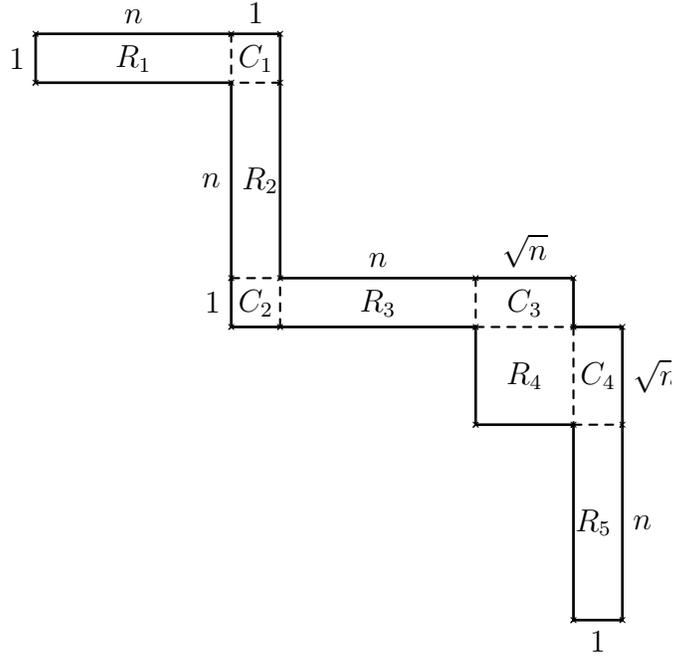

\end{document}